\newif\ifrevised
\newcommand{\revised}[1]{%
	\ifrevised
	\color{blue} #1 \color{black} %
	\else
	#1%
	\fi}
\providecommand{\U}[1]{\protect\rule{.1in}{.1in}}
\newtheorem{theorem}{Theorem}
\newtheorem{assumption}[theorem]{Assumption}
\newtheorem{definition}[theorem]{Definition}
\newtheorem{example}[theorem]{Example}
\newtheorem{lemma}[theorem]{Lemma}
\newtheorem{proposition}[theorem]{Proposition}
\newtheorem{remark}[theorem]{Remark}
\newenvironment{proof}[1][Proof]{\noindent\textbf{#1.} }{\ \rule{0.5em}{0.5em}}
\newcommand{\R}{\mathbb{R}^d}
\newcommand{\norm}[1]{\left\| #1 \right\|}
\newcommand{\inner}[2]{\left< #1 , #2 \right>}
\newcommand{\Ep}[1]{\mathbb{E}\left[#1\right]}
\let\originalleft\left
\let\originalright\right
\renewcommand{\left}{\mathopen{}\mathclose\bgroup\originalleft}
\renewcommand{\right}{\aftergroup\egroup\originalright}
\newcommand{\obj}{\ensuremath{\mathcal{L}}}
\renewcommand{\l}{\ell}
\newcommand{\dist}[1]{\operatorname{\bold{D}}(#1,\mathcal{W}^*)}
\newcommand{\FundingLogos}{
 % Adjust height here but DO NOT make the EU emblem smaller than 1cm for print.
  % Use the official files downloaded from the Commission / ERC sites.
  \raisebox{0pt}{\includegraphics[height=1.5cm]{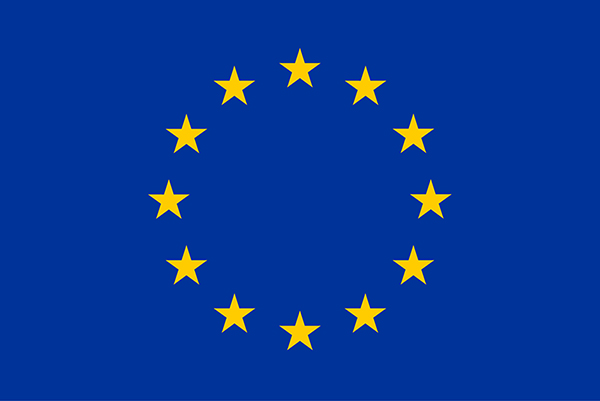}}%
  \hspace{1em}%
  \raisebox{0pt}{\includegraphics[height=1.5cm]{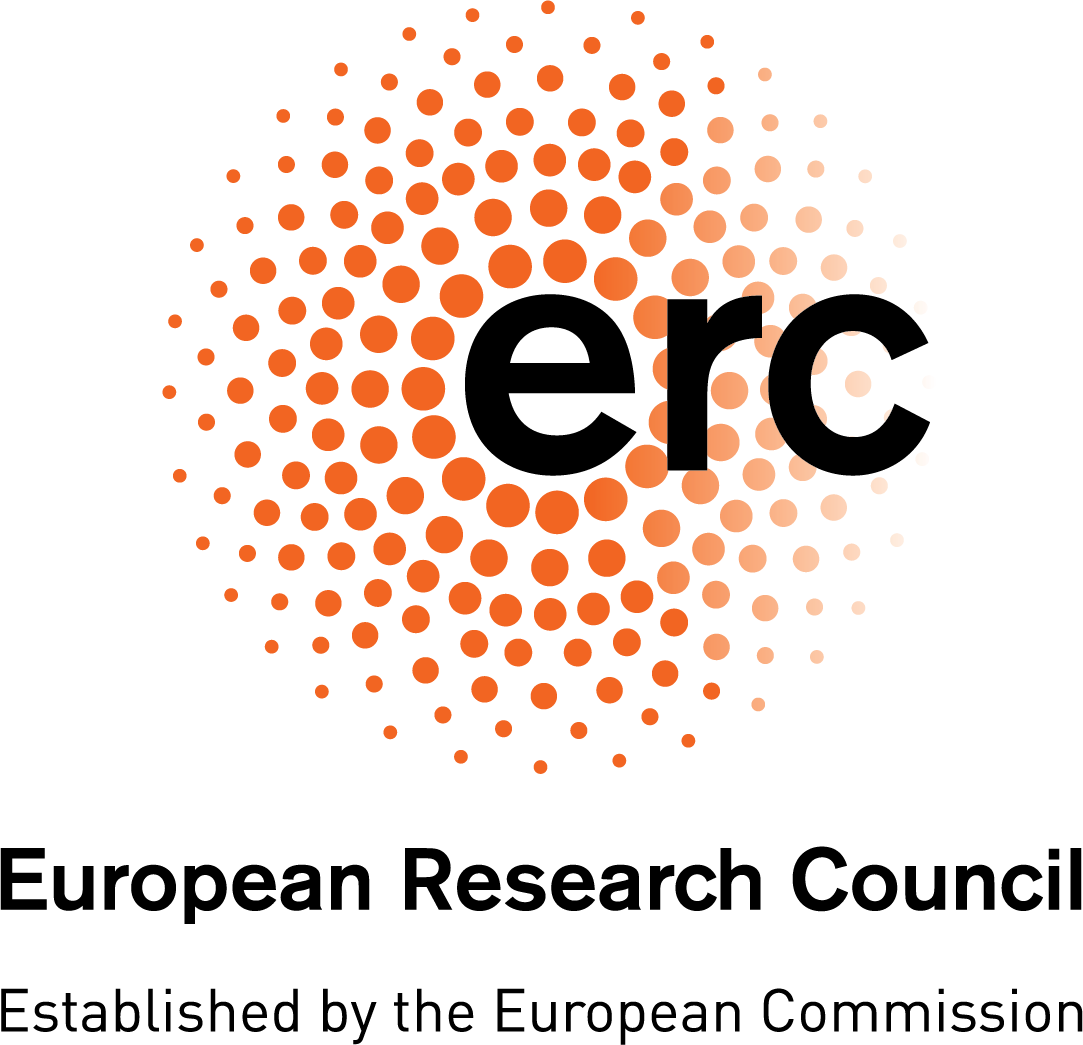}}%
}
\title{Large-Time Analysis of the Langevin Dynamics for Energies Fulfilling Polyak-{\L}ojasiewicz Conditions}
\author[1,2,3]{Massimo Fornasier}
\author[1,2,3]{Lukang Sun}
\author[4,5,6]{Rachel Ward}
\affil[1]{Department of Mathematics, Technical University of Munich, Garching bei M\"unchen, Germany.}
\affil[2]{Munich Center for Machine Learning, Munich, Germany}
\affil[3]{Munich Data Science Institute, Technical University of Munich, Garching bei M\"unchen, Germany.}
\affil[4]{Department of Mathematics, University of Texas, Austin, U.S.A.}
\affil[5]{Oden Institute for Computational Engineering and Sciences, University of Texas, Austin, U.S.A.}
\date{July 14, 2025}
\begin{document}

\maketitle
{
\begin{abstract}
In this work, we take a step towards understanding overdamped Langevin dynamics for the minimization of a  general class of objective functions $\obj$. We establish well-posedness and regularity of the law $\rho_t$ of the process through novel a priori estimates,  and, very importantly, we characterize the large-time behavior of $\rho_t$ under truly minimal assumptions on $\obj$.
In the case of integrable Gibbs density, the law converges to the normalized Gibbs measure. In the non-integrable case, we prove that the law diffuses. The rate of convergence is  $\mathcal O(1/(\sigma t))$, where $\sigma$ is the diffusion parameter.
Under  a Polyak–{\L}ojasiewicz (PL) condition on $\obj$, we also derive sharp exponential contractivity results toward the set of global minimizers. Combining these results 
we provide the first systematic convergence analysis of Langevin dynamics under PL conditions in non-integrable Gibbs settings: a first phase of exponential in time contraction   toward the set of minimizers and then a large-time exploration over it with rate $\mathcal O(1/(\sigma t))$.
\end{abstract}
}
%\section{Introduction}

% Motivation:
% \begin{itemize}
%     \item Stochastic Gradient Descent algorithms ... ML ... deep learning ...
% \end{itemize}
% Context:
% \begin{itemize}
%     \item Polyak-{\L}ojasiewicz conditions are used to prove convergence of gradient flows to minimizers of objective functions $G$;
%     \item $\log$-Sobolev inqualities are used to prove convergence to Gibbs measures $e^{-G/\sigma}$ by Langevin dynamics;
%     \item $\log$-Sobolev inqualities do not apply for the case where $e^{-G/\sigma}$ is not integrable; a study of the Langevin dynamics under Polyak-{\L}ojasiewicz conditions is not known.
% \end{itemize}
% Results:
% \begin{itemize}
%     \item We analyze the Langevin dynamics under 
% Polyak-{\L}ojasiewicz conditions on objective functions $G$;
% \item We establish well-posedness and regularity of the law of the process $\rho_t=\operatorname{Law}(X_t)$, providing novel  a priori estimates of regularity; 
% \item We prove exponential contractivity of the dynamics to the set of minimizers $\mathcal{X}^*:=\arg\min \obj$. The contractivity estimates are sharp. Moreover we show large time convergence of the law to a multiple of $e^{-G/\sigma}$. In case $e^{-G/\sigma}$ is integrable, then the multiplier is simply the renormalization constant $\frac{1}{\int e^{-G/\sigma}}$. Otherwise the multiplier is $0$ and the law $\rho_t$ diffuses everywhere, exploring $\mathcal{X}^*:=\arg\min \obj$ at large.
% \end{itemize}

\section{Introduction}

Stochastic Gradient--based algorithms are the workhorses of modern machine learning. Recall the general set-up: the aim is to minimize a loss function of the form $\mathcal{L}(w) = \frac{1}{N} \sum_{i=1}^N \ell_i(w)$, and the Stochastic Gradient update rule is 
\begin{align}
w^{(k+1)} &= w^{(k)} - \eta_k \nabla {\cal L}(w^{(k)}) + \eta_k \xi^{(k)},
\end{align}
where $\eta_k > 0$ is the step-size at iteration $k$, and $\xi^{(k)}$ is a stochastic noise vector, independent of previous $\xi^{(h)}$ $h < k$, but possibly depending on $w^{(k)}$.   One source of noise, which is almost unavoidable in large-scale machine learning applications, is due to applying mini-batch gradient descent en leiu of full gradient descent, in which case $\xi^{(k)}=  \frac{1}{|B_k|} \sum_{i\in B_k} \nabla \ell_i(w^{(k)}) - \nabla {\cal L}(w^{(k)})$, where $B_k \subset [N]$ are the indices in the mini-batch used at iteration $k$, modeled as drawn independently of previous mini-batches.  A second or alternative source of noise is injected Gaussian noise $\xi^{(k)}$ with bounded variance $\mathbb{E} \| \xi^{(k)} \|^2 \leq M$ or affine variance $\mathbb{E} \| \xi^{(k)} \|^2 \leq M_1 + M_2 \| \nabla {\cal L}(w^{(k)})\|^2 $.  Gaussian noise can be purposefully injected to render the algorithm privacy-preserving (so-called `DP SGD') \cite{abadi18}, to enable exploration of possibly non-convex loss landscapes \cite{misha1,cooper, misha2,misha3} and because doing so can cause an implicit regularization effect that often improves generalization \cite{arora22,neyshabur2015,razin2020implicit}.  
Focusing on the Gaussian noise setting, when the variance term is bounded $\mathbb{E} \| \xi^{(k)}\|^2 \leq M$, or even increases inversely with a decreasing stepsize $\eta_k$,  it makes more sense to consider the Stochastic Gradient update rule $w^{(k+1)} = w^{(k)} - \eta_k (\nabla {\cal L}(w^{(k)}) + \xi^{(k)})$ as acting on a random variable $W^{(k)}$ rather than a point $w^{(k)}$, and to analyze convergence in terms of the law of $W^{(k)}$ toward an equilibrium distribution, instead of analyzing convergence of a point estimate $w^{(k)}$ to a critical point $w^{*}$ (or better to a global minimizer) of the loss function.
\revised{Let us recall  that the problem of ``sampling" (drawing samples) from a high dimensional probability distribution is well-known. It is intrinsically affected by the curse of dimensionality because of the measure concentration phenomenon.}  \revised{In view of the convergence in law,} one would hope that convergence results from the ``optimization" setting find natural analogues in the ``sampling" setting, and vice versa. 

\subsection{The strongly convex regime}
{In the most classical regime, the translation between optimization and sampling results is seamless.  The most classical assumption in gradient descent convergence theory is that the loss function ${\mathcal L}$ is $\mu$-\emph{strongly convex} and $L$-Lipschitz smooth, in which case a direct proof shows that for a range of constant step-size $\eta_k = \eta$,  $w^{(k)}$ converges to the unique global minimizer of ${\cal L}$ at a linear rate proportional to the condition number $L/\mu$. When noise with bounded  variance $\mathbb{E} \| \xi^{(k)} \|_2^2 \leq M$ is added, the expected convergence  $\mathbb{E}[\mathcal{L}(w^{(k)}) - \mathcal{L}(w^{*})]$ is again  linear with rate $L/\mu$, \revised{however, the iterates only approach \emph{a neighborhood of the global minimizer whose radius is on the order of $M\eta \frac{L}{\mu}$}, see \cite[Theorem 4.6]{bottou2018optimization} for details.}}

% but the iterations reach \emph{up to a radius of around the global minimizer of size proportional to $M \eta \frac{L}{\mu}$}, see \cite[Theorem 4.6 ]{bottou2018optimization}for details.

This suggests that in the strongly convex regime, and in the limit of small constant step-size $\eta$ where the discrete-time Stochastic Gradient update converges to a continuous-time  \emph{stochastic differential equation}, the dynamics should be such that the initial probability distribution of points is pushed towards a distribution centered at the global minimizer $w^{*}$ and having a variance proportional to $cM,$ where $c = \frac{L}{\mu} \eta \in (0,1)$. And indeed, this is the case.  Consider the formal limit of this process, the Langevin dynamics:
  \begin{equation}\label{eq:langevin}
    d w_t=-\nabla\obj(w_t)dt+\sqrt{2\sigma}dB_t.
\end{equation}
where $t \to B_t$ is a $d$-dimensional Brownian motion, $\sigma>0$, and $w_{t=0}=w_0 \in \R$ is given or drawn at random according to an initial probability measure $\rho_0 \in \mathcal P(\R)$.

The Langevin dynamics \eqref{eq:langevin} converge classically to {\it near} global minimizers under the so-called \emph{log-Sobolev inequality} (LSI).  The LSI holds for $\rho_\infty$ if, for any $\lambda>0$,
\begin{equation}
\label{eq: LSI} 
\begin{array}{c}
\int_{\R} \log \left( \frac{d\rho}{d\rho_\infty} \right) d\rho(x) =: \underbrace{H(\rho | \rho_\infty)}_{\substack{\text{relative} \\ \text{entropy}}} \leq \frac{1}{2 \lambda} \underbrace{I(\rho | \rho_\infty)}_{\substack{\text{Fisher} \\ \text{information}}} := \frac{1}{2 \lambda} \int_{\R} \revised{\norm{\nabla \log \left( \frac{d\rho}{d\rho_\infty} \right)  }^2 }d\rho(x) \\
\mbox{ for all } \rho \in \mathcal{P}(\R).
\end{array}
\end{equation}
{(As in this paper we do not actually make use of LSI, we do not introduce the relative entropy nor the Fischer information in full glory, and we refer to \cite[Section 9.2]{villani2003} for details.)}
    For the noisy evolution \eqref{eq:langevin},
    an LSI with constant \(\lambda > 0\) ensures exponential convergence of the law \(\rho_t := \operatorname{Law}(w_t)\) in terms of Wasserstein distance or relative entropy toward the normalized  Gibbs density \( \rho_\infty \propto \pi:=e^{-\obj/\sigma}  \). Indeed, \emph{a typical $\obj$ for which \eqref{eq: LSI} holds is a strictly convex function, or an $L^\infty$ perturbation of a strictly convex function}, see \cite{hs88}. Note that in the case $\obj(w)  = \| A w - y \|^2$ where $A$ is invertible, then the Gibbs density $\pi=e^{-\obj/\sigma}$ is a Gaussian density centered at the minimizer $w^{*}$, and this density has variance $\sigma^2$, aligning with the discrete SGD results about fast convergence within a radius of the variance. 
    
 Note that the discrete-time theory for SGD allows for a general range of noise distributions including Gaussian noise as a special case.  The continuous-time Langevin dynamics theory, while specific to Gaussian noise, nevertheless provides a finer-grained picture of how the dynamics are evolving in a distributional sense: any initial distribution converges in the Wasserstein distance towards the Gibbs density $e^{-\obj/\sigma}$.
\\

\subsection{The PL inequality regime}

The strong convexity regime is classical, and the discrete-time Stochastic Gradient Descent (SGD) theory and the continuous-time Langevin dynamics theory are well-understood and related.  However, practical implementations of SGD in training, e.g., deep neural networks, are {aiming at optimizing highly nonconvex landscapes}. A better assumption than strong convexity for overparameterized neural networks is a local version of the Polyak--Łojasiewicz (PL) inequality  \cite{misha1,misha2, misha3}.

  First, let us recall the PL inequality. When the deterministic gradient flow
    \[
        \dot{w}_t = -\nabla \mathcal{L}(w_t)
    \]
    or the Gradient Descent
    \[
        w^{(k+1)} = w^{(k)} - \eta \nabla \mathcal{L}(w^{(k)})
    \]
    are analyzed under the PL inequality
   \begin{equation} \label{eq:PLg}
        \frac{1}{2} \|\nabla \mathcal{L}(w)\|^2 \geq \mu \left( \mathcal{L}(w) - \min \obj\right), \qquad \mu > 0, \quad \forall w \in \R, 
    \end{equation}
    one obtains exponential convergence to the minimizer set
    \[
        \mathcal W^* := \operatorname*{arg\,min} \mathcal{L},
    \]
    matching the fast convergence rate one achieves under strong convexity assumptions on the \emph{loss function} convergence  ${\mathcal L}(w^{(k)}) \to \min \obj$ (as opposed to the pointwise convergence $w^{(k)} \to w^{*}$ which is more difficult in the PL inequality setting as $w^{*}$ is not a necessarily a point, but possibly a set of points).  {In the Stochastic Gradient regime, fast convergence to within a radius of the minimizing set can be obtained for Stochastic Gradient Descent (SGD) of the form
   \begin{equation}\label{eq:sgd}
       w^{(k+1)}=w^{(k)}-\eta\Big(\nabla\obj(w^{(k)})+\xi^{(k)}\Big),
   \end{equation}
    where $\xi^{(k)},k=0,1,2,\ldots$ are independent random noise with mean $0$ and variance bounded by $\delta$. Under an extra $L$-smoothness assumption of $\obj$, one can derive that
    \begin{equation}
    \label{eq:eta0}
    \Ep{\obj(w^{\revised{(k+1)}})-\min \obj}\leq (1-\mu\eta)\Ep{\obj(w^{(k)})-\min \obj}+\frac{\eta^2L\delta}{2},
    \end{equation}
    here we need $\eta\leq \frac{1}{L}$, see, e.g. \cite{garrigos2023handbook}. Note that the discretized Langevin dynamics
    \[
    w^{(k+1)}=w^{(k)}-\eta\nabla\obj(w^{(k)})+\sqrt{2\eta}B^{(k)},
    \]
here $B^{(k)},k=0,1,2,\ldots$ are independently sampled from the standard normal distribution $\mathcal{N}(0,I_d)$, can also be written into the form \eqref{eq:sgd} but the variance will be bounded by $\delta=\frac{2d}{\eta}$.}
The analogy between the ``strongly convex setting" and the ``PL inequality setting" breaks down at this point:  
{in the continuous-in-time limit $\eta \to 0$, the bound in equation \eqref{eq:eta0} loses meaning, hence, until now, there remains \revised{no comprehensive theory} for the Langevin dynamics \eqref{eq:langevin} under PL inequality. }

Our  contribution is to close this gap, and provide the first analysis of Langevin dynamics 
\revised{in greater generality} under the PL inequality, showing rigorously that under the PL condition, the Langevin dynamics decompose into two phases: a fast convergence to the set of minimizers, followed by a slower diffusion along the set of minimizers.  {The analysis of this second phase does not actually require the PL condition and holds under more general assumptions and it is a relevant result of this paper of independent interest. We reiterate that this is far outside the scope of current sufficient conditions, such as  $\log-$Sobolev inequality or Poincaré inequality, see, e.g., \cite{pmlr-v65-raginsky17a}.  Indeed}, our results give further theoretical framework for recent results along these lines: In recent work, e.g., \cite{arora22,shalova2024}, the ability of Stochastic Gradient Descent to explore the set of global minimizers once the set has been reached was modeled and studied. {The practical significance lies in the fact that this behavior allows for the identification of multiple quasi-optimal solutions, some of which may generalize better after deep learning training.} In the aforementioned studies, the dynamics of Stochastic Gradient Descent is typically divided into two phases. The first phase concerns convergence to the set of global minimizers, while the second phase describes the random, oscillatory drift of the iterates along this set, which is often assumed to form a high-dimensional manifold. 
\revised{As studied in \cite{arora22,shalova2024}, when Stochastic Gradient Descent reaches zero loss in over parameterized models, it does not truly halt; instead, it enters a phase of {\it implicit regularization} known as stochastic drift. Even though the average gradient is zero, the remaining noise from mini-batch sampling pushes the parameters along the manifold of global minimizers. Mathematically, this movement acts as a second-order optimizer that favors flat minima over sharp ones, effectively minimizing the {\it sharpness}, i.e., the Hessian trace $\operatorname{tr}(\nabla^2 \obj(x))$ of the loss. This ``post-convergence" evolution is crucial because it reduces the complexity of the solution and improves generalization even after the training error has vanished. Technically in the vicinity of the set of minimizers the dynamics is described by a second order dynamics involving $\nabla^2 f$  (as $\nabla f(w)=0$ on $\mathcal W^*$).
A comprehensive understanding of the exploratory behavior of the Langevin dynamics after convergence to the set of global optima remains an open question, one that we contribute to further explore in this paper. Yet, we do not follow a  local second order analysis around the set  minimizers as in \cite{arora22,shalova2024} to emphasize the implicit regulariazion, but rather prove the large time diffusion of the dynamics over the manifold of minimizers. Our techniques rest in the analysis of the Fokker-Plank equation governing the law of the dynamics.}\\

\revised{We may also mention other recent results addressing the large time behavior of the Langevin dynamics with and without PL conditions, but they are all in significantly more restrictive settings. For example, in \cite{woj24} one large-time asymptotics result, namely Theorem 2.9, is presented for a modified version of the Langevin dynamics, where the variance is time dependent and dominated by the objective function. The result is limited to objective functions $\obj$ that grow quadratically with a lower bound
$$
\obj(w) \geq \lambda (1+\|w\|)^2.
$$
This implies in particular that $\pi = e^{-\obj/\sigma}$ is integrable. Moreover, this condition also excludes that the set of minimizers is unbounded.
In the more recent work \cite{gong2025poincare}  the authors show that a {\it local} PL condition implies the Poincaré inequality, which in turn yields exponential convergence. Yet, their model of objective function does not possess saddle points, the set of minimizers must be contained in a compact set, and the Gibbs density $\pi=e^{-\mathcal L/\sigma}$ is integrable; therefore the model represents a significantly more restrictive subclass of the objective functions addressed in our present paper. Moreover, we need to stress once again that the large time convergence in our analysis (what we called the second phase of the dynamics above) is {\it not} dependent on the PL condition. Moreover, one can relate the LSI and PL settings:
For the case where the density $\rho_\infty=\pi=e^{-\obj/\sigma}$ is integrable and fulfills the LSI with constant $\lambda = \lambda_\sigma$ and $\mathcal L$ fulfills a PL condition with constant $\mu$, the work \cite{chewi2024} provides a relationship between the PL constant $\mu$ and the LSI constant $\lambda_\sigma$ for $\sigma \to 0$, namely $\mu = \lim_{\sigma \to 0} \frac{1}{\sigma \lambda_\sigma}$. Yet, also in this case, the setting is more restrictive than the one we are considering in the present paper. Taken together, these recent references support the conclusion that our work provides, to the best of our knowledge, the most general analysis of Langevin dynamics currently available. Let us now present our contribution in details.}

{\subsection{Our Contribution}

This paper addresses the convergence  of the Langevin dynamics to global minimizers under  Polyak--Łojasiewicz conditions, without necessarily assuming integrability of \( e^{-\obj/\sigma} \), and the large time exploratory behavior of the dynamics over the set of minimizers. More specifically, our main contributions are:

\begin{enumerate}
    \item \textbf{Well-posedness and regularity.} For $w_t$ solution of \eqref{eq:langevin}
    We revisit results of global existence and uniqueness of the law \( \rho_t =\operatorname{Law}(w_t) \) as solution of the Fokker-Planck equation
    \begin{equation}\label{eq:FP0}
\partial_t \rho_t(w) = \operatorname{div}(\nabla \obj(w) \rho_t(w)) + \sigma \Delta \rho_t(w).
\end{equation}
    In particular, we contribute with new a priori estimates for its regularity, under assumptions of regularity on $\obj$. 
    
    \revised{The existing literature on Fokker–Planck equations mainly addresses the existence, uniqueness, and regularity of solutions under very general conditions. Consequently, explicit convergence rates, like those established in the present work, are generally not available for such generalized Fokker–Planck equations; see \cite{bogachev2022fokker} and references therein. When $\mathcal{L}$ is constant, the estimates obtained here can be recovered using the heat kernel representation of the heat equation solution. In contrast, for a general $\mathcal{L}$, the heat kernel is unavailable, yet our approach still yields these estimates. Another approach to studying the Fokker–Planck equation is via $\Gamma$-calculus and functional inequalities, such as the log-Sobolev and Poincar\'e inequalities, to analyze the long-time behavior of diffusion semigroups; see \cite{bakry2014analysis} and references therein. However, this approach relies on such functional inequalities, so the results for general smooth $\mathcal{L}$ presented here do not appear in the existing literature along this line, as far as we know.}
  \item \textbf{Large-time behavior of the law.} \\
    We describe the precise asymptotic behavior of \( \rho_t \), governed by the integrability of \( \pi=e^{-\obj/\sigma} \):
    \begin{itemize}
        \item \emph{Integrable case:} If \( \int_{\mathbb{R}^d} e^{-\obj(w)/\sigma} \, dw < \infty \), then \( \rho_t \) converges to the normalized Gibbs measure $\pi(w)dw$.
        \item \emph{Non-integrable case:} If $\pi(w)$ is not integrable, then  \( \rho_t(Z) \to 0 \) for all $Z\Subset \R$.\footnote{Differently from the results in, e.g., \cite{arora22,shalova2024}, we do not perform a local analysis around $\mathcal{W}^*$. In \cite{arora22,shalova2024} the authors show that the dynamics $w_t$ exhibits an oscillating drift behavior along the smooth manifold of global minimizers $\mathcal{W}^*$, while we aim at describing the dynamics of the law of the process in its entirety-.}
    \end{itemize}
   
    In both cases the convergence holds with an explicit quantitative rate of $\mathcal O(1/\revised{(t\sigma)})$. {Here we need to stress very much, as fundamental  contribution of this paper, that  the asymptotic  results are obtained \emph{without requiring PL or LSI conditions}.}
    \item \textbf{Sharp exponential contractivity.} We prove that, under both global and local  Polyak-{\L}ojasiewicz conditions on $\obj$, the solution contracts exponentially at rate \( e^{-\mu t} \), causing the law to quickly concentrate on the minimizer set $\mathcal{W}^*:=\arg\min \obj$.

    \item \textbf{Two-phase dynamics.} Combining 2. and 3. we conclude that if $\obj$ obeys a global Polyak-{\L}ojasiewicz condition, then the dynamics will first concentrate on the set $\mathcal W^*$ of global minimizers and then it will have an asymptotic behavior according to the following dichotomy:
  \begin{enumerate}
    \item either $\obj$ has compact set of minimizers, and then necessarily $\pi(w)=e^{-\mathcal L(w)/\sigma}$ is  integrable, and $\rho_t$ converges to the normalized Gibbs measure;
    \item or $\obj$ has an unbounded set of minimizers, in which case —under the additional assumption that $\obj(w) - \min \obj \leq H(\operatorname{dist}(w,\mathcal W^*))$ for all $w$ and for some positive continuous function $H$—necessarily $\pi(w)=e^{-\mathcal L(w)/\sigma}$ is not integrable and $\rho_t$ diffuses everywhere over $\mathcal W^*$.
\end{enumerate}  
\end{enumerate}

These results bridge PL conditions and Langevin dynamics, providing the first rigorous asymptotic analysis for Langevin dynamics in non-integrable Gibbs regimes. They also offer theoretical support for empirical observations that noisy gradient methods effectively explore flat minima—even when no stationary Gibbs measure exists.

{In summary, the primary results of this paper are:
% \begin{theorem}
%     Under Assumption \ref{assumption} and Assumption \ref{assumption2}, the Langevin dynamics~\eqref{eq:langevin} will concentrate around the set of global minimizers of $\obj$ with the following explicit convergence rate
%     \begin{equation}
%   \Ep{\obj(w_t)-\obj_*} \leq \Ep{\obj(w_0)-\obj_*} e^{-(\l_1-\sigma\l_2)t} + \sigma\l_3 \frac{1- e^{-(\l_1-\sigma\l_2)t}}{\l_1-\sigma\l_2};
% \end{equation}
% and diffuse along the set of global minimizers with the following explicit rate
% \begin{equation}
%     \int\norm{\nabla\phi_t}^2d\pi(w)\leq \frac{\int\norm{\phi_0}^2d\pi(w)}{2t},\quad\forall t> 0,
% \end{equation}
% here $\phi_t=\rho_t(w)/\pi(w)$.
% If further $\obj\in C^k(\mathbb{R}^d)$ for $k\geq 2$, we have
% \begin{equation}
%      \int\norm{\mathfrak{F}_k\phi_{t}}^2d\pi(w)\leq \frac{k^k\int\phi_0^2d\pi(w)}{2^kt^k}=\Big(\frac{k}{2t}\Big)^k\int\phi_0^2d\pi(w),\quad\forall t> 0,
% \end{equation}
% here operators are defined by $
%         \mathfrak{L}\phi:=\sigma\Delta \phi-\inner{\nabla\obj}{\nabla\phi}$ and
% \begin{equation}
%         \mathfrak{F}_k\phi:=\begin{cases}
%         \mathfrak{L}^{\frac{k}{2}}\phi& \text{$k$ is even}\\
%         \nabla\mathfrak{L}^{\frac{k-1}{2}}\phi& \text{$k$ is odd}.
%     \end{cases}
% \end{equation}
% \end{theorem}
\begin{theorem} Assume that $\obj \in C^{1,1}(\R)$ and $\revised{\mathfrak{L}\varphi:=\Delta \varphi-\frac{\inner{\nabla\obj}{\nabla\varphi}}{\sigma}}$ fulfills conditions A or B reported in formulae \revised{\eqref{eq:a36} and \eqref{eq:a37}}. Then $\rho_t=\operatorname{Law}(w_t)$ is the unique smooth solution of \eqref{eq:FP0} and  $\varphi_t=\rho_t(w)/\pi(w)$ enjoys the following estimate:
\begin{equation}\label{eq:slowconv}
    \revised{\int\norm{\nabla\varphi_t}^2d\pi(w)\leq \frac{\int\norm{\varphi_0}^2d\pi(w)}{2\sigma t},\quad\forall t> 0,}
\end{equation}
here $\varphi_t=\rho_t(w)/\pi(w)$. \revised{In particular $\rho_t(Z) \to \varphi_\infty \pi(Z)$ for $t\to \infty$, for all compact $Z \subset \R$, with rate $\mathcal O(1/(t\sigma))$, here  $\varphi_\infty=1/\pi(\R)$ if $\pi$ is integrable, otherwise $\varphi_\infty=0$.}
If further $\obj\in C^k(\mathbb{R}^d)$ for $k\geq 2$, we have the additional regularity estimates:
\begin{equation}
    \revised{ \int\norm{\mathfrak{F}_k\varphi_{t}}^2d\pi(w)\leq \Big(\frac{k}{2\sigma t}\Big)^k\int\varphi_0^2d\pi(w),\quad\forall t> 0,}
\end{equation}
where
\begin{equation}
        \mathfrak{F}_k\varphi:=\begin{cases}
        \mathfrak{L}^{\frac{k}{2}}\varphi& \text{$k$ is even}\\
        \nabla\mathfrak{L}^{\frac{k-1}{2}}\varphi& \text{$k$ is odd}.
    \end{cases}
    \end{equation}
  Moreover, under Assumption \ref{assumption}, Assumption \ref{assumption2}, and $\l_1>\sigma\l_2$, the Langevin dynamics~\eqref{eq:langevin} will concentrate around the set of global minimizers of $\obj$ with the following explicit convergence rate
    \begin{equation}\label{eq:expconc}
   \Ep{\obj(w_t)-\obj_*} \leq \Ep{\obj(w_0)-\obj_*} e^{-(\l_1-\sigma\l_2)t} + \sigma\l_3 \frac{1- e^{-(\l_1-\sigma\l_2)t}}{\l_1-\sigma\l_2}, t \geq 0.
 \end{equation}  
 Combining the results \eqref{eq:expconc} and \eqref{eq:slowconv}, the evolution is decomposed into an exponentially fast dynamics of concentration toward the set of minimizers $\mathcal{W}^*$, followed by a slow dynamics of exploration of the set $\mathcal{W}^*$ with rate $\revised{\mathcal O(1/(t\sigma))}$.
 \end{theorem}
}

% In what follows we want to show that for $\obj$ fulfilling  suitable  Polyak-{\L}ojasiewicz conditions the Langevin dynamics is described in two stages. The first stage is characterized by an exponentially fast concentration around $\mathcal{W}^*:=\arg\min \obj$ for $0<\sigma \ll 1$ sufficiently small, and the second stage is the diffusion of the process, characterized by the convergence of the law $\rho_t=\operatorname{Law}(X_t)$ to a multiple of the Gibbs density $\pi(w):=e^{-\frac{\obj(w)}{\sigma}} $: in case $\pi(w)$ is integrable, then the multiplicative  constant $\frac{1}{\pi(\mathbb R^d)}$ is positive,  otherwise the multiplicative constant  is necessarily $0$, meaning that $\rho_t$ diffuses everywhere.
% The meaning of such description is the following: solutions of \eqref{eq:langevin} first approach the set of global minimizers $\mathcal W^*$ with a fast dynamics and then they explore $\mathcal W^*$ in the long run.

The content of the paper is organized as follows: In Section \ref{sec:converg} we show the convergence of the solution of the Langevin dynamics to the set of global minimizers under a global Polyak-{\L}ojasiewicz condition. In Section \ref{sec:locaPL} we adapt the result to allow local convergence in a ball under a local Polyak-{\L}ojasiewicz condition. This local adaptation is motivated by applications in deep learning training.
Section \ref{sec:wpreg} is dedicated to a re-visitation of the well-posedness and regularity of solutions of Fokker-Planck equations \eqref{eq:FP} with novel a priori estimates. This preliminary regularity result will serve to justify the large time behavior, which is characterized in Section \ref{sec:asympt}.

\section{Convergence to minimizers under a global Polyak-{\L}ojasiewicz condition}\label{sec:converg}

\subsection{Assumptions}

Denote $\obj_*= \min \obj$, $\mathcal{W}^*:=\arg\min \obj$, and 
$\dist{w}:=\inf_{w'\in\mathcal{W}^*}\norm{w-w'}$.
\begin{assumption}\label{assumption}
    The objective function $\obj$ satisfies the following conditions for all $w \in \R$:
    \begin{align}
   &\obj(w)-\obj_*\leq \frac{1}{\l_1} \norm{\nabla\obj(w)}^2 ,\quad \l_1>0, \label{eq:ass1} \\
            &|\Delta \obj(w)|\leq \l_2\Big(\obj(w)-\obj_*\Big)+\l_3,\quad \l_2,\l_3>0, \label{eq:ass2}\\
            &\norm{\nabla\obj(w)}\leq H\Big(\obj(w)-\obj_*\Big),\quad\text{ for some non-negative continuous function } H:\mathbb{R}_+\to\mathbb{R}_+, \label{eq:ass3}
\end{align}
\end{assumption}
Additionally we need to require:
\begin{assumption}\label{assumption2}
    The Langevin dynamics~\eqref{eq:langevin} has a unique strong solution.
\end{assumption}
Sufficient conditions for \eqref{eq:langevin} to have a unique strong solution is that 
 $\nabla \obj$ is locally Lipchitz continuous and $\norm{\nabla \obj(w)}\leq C(1+\norm{w})$, see \cite[Corollary 6.3.1]{arnold1974stochastic}. Moreover, also when $\nabla \obj$ satisfies local integrability and super-linear growth conditions, \cite[Theorem 1.2]{xie2016sobolev} can again ensure the validity of  Assumption \ref{assumption2}.
 \\
\begin{remark}
An important consequence of the PL inequality is quadratic growth: for $\obj$ fulfilling the global Polyak-{\L}ojasiewicz condition, there is a non-negative function $F:\mathbb{R}_{+}\to\mathbb{R}_{+}$ with $F(r)\to\infty$ as $r\to\infty$, such that $\obj(w)-\obj_*\geq F(\dist{w})$, see ~\cite[Theorem 2 and Appendix A]{karimi2016linear}.
\end{remark}
 
A simple example of function $\obj$ fulfilling Assumption \ref{assumption} is given by 
\begin{equation}\label{eq:example}
   \obj(w) = \| A(w-w^*)\|^2  
\end{equation}for any matrix $A \in \mathbb R^{n\times d}$ with $n \ll d$. In this case the set $\mathcal{W}^*:=\arg\min \obj$ is the affine space $w^* + \operatorname{Ker}(A)$. For this $\obj$, one can choose $\l_1=4 \sigma_n(A^T)^2$ ($\sigma_n(A^T)$ is the minimal positive singular value of $A^T$), $\l_2\geq0$, and $\l_3=2 \operatorname{tr}{(A^TA)}=2 \|A\|_F^2$ for Assumption \ref{assumption} to hold. The example \eqref{eq:example} is a simple example for an \emph{overparameterized} loss function used in machine learning for modeling the training neural networks by means of Stochastic Gradient Descent. It is important to notice that such loss functions $\obj$ have affine spaces of global minimizers and the corresponding Gibbs density $e^{-\obj(w)}$ may not be integrable.

\begin{remark}
    Some comments on Assumption \ref{assumption} are in order. 
    \begin{itemize}
        \item 
   The first condition \eqref{eq:ass1} in Assumption \ref{assumption} is the Polyak-{\L}ojasiewicz (PL) inequality.  
 
\item For applications where gradient flows/descent methods are used, the PL condition is considered natural to describe convergence to global minimizers.  
The corresponding Gibbs density $\pi(w):=e^{-\frac{\obj(w)}{\sigma}} $ may not be integrable (an example is precisely given by \eqref{eq:example}).
Hence, $\pi(w)$ cannot be renormalized to probability measure and therefore does not fulfill the well-known $\log$-Sobolev inequality used to prove convergence of the Langevin dynamics to the invariant measure $e^{-\frac{\obj(w)}{\sigma}} dw$. We recall that the $\log$-Sobolev inequality is fulfilled, for instance,  by $L^\infty$ perturbations of strictly convex functions, see \cite{hs88}. This model of nonconvexity is thus on the one hand broader than the one provided by the PL inequality, but at the same time more restrictive as it requires integrability of $\pi(w)$, which fails even in simple examples such as  \eqref{eq:example}. 
\item While the PL condition is sufficient to prove convergence to a minimizer for the gradient flow dynamics, it appears to be incomplete to provide a similar result for the Langevin dynamics. The intuitive reason is the 
need for a condition to control the diffusion, especially in the case where $\pi(w)$ is not integrable, which results in a control of the Laplacian of $\obj$, as in the second condition \eqref{eq:ass2} of Assumption \ref{assumption}.
\revised{In the contractivity results presented below, we assume that $\ell_2\sigma < \ell_1$, reflecting a controlled bound on the noise level. }
\item 
The last condition \eqref{eq:ass3} in Assumption \ref{assumption} is technically useful to show  that $\Ep{\int_0^{t}\nabla \obj(w_s)dB_s} =0$  (see the proof of Proposition \ref{prop1} below) and it is by no means very restrictive, for example,  one can choose $H(s)=C(1+s^p),\forall s\in\mathbb{R}_+$.
 \end{itemize}
\end{remark}

\subsection{Mass Concentration}
In this section we show that, for suitable parameters $\l_1,\dots,\l_3$ and $\sigma>0$ sufficiently small the dynamics \eqref{eq:langevin} does concentrate exponentially fast around  $\mathcal{W}^*:=\arg\min \obj$, no matter whether $\pi(w)$ is integrable.
\begin{lemma}\label{lem:vanish}
    Assume that $\obj$ fulfills Assumption \ref{assumption} and $\sigma>0$ is such that $\l_1>\sigma\l_2$. Let $w_t$ be a solution of \eqref{eq:langevin}, then 
    \begin{equation}
    \Ep{\int_0^{t}\nabla \obj(w_s)dB_s} =0, \quad \mbox{ for all } t>0.       
    \end{equation}
\end{lemma}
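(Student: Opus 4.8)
The claim is that the stochastic integral $\int_0^t \nabla\obj(w_s)\,dB_s$ has zero expectation, which is the standard martingale property of an Itô integral — but this requires justification because $\nabla\obj$ need not be bounded, so one must verify the square-integrability condition $\Ep{\int_0^t \norm{\nabla\obj(w_s)}^2 ds} < \infty$. My plan is to establish this integrability bound, and the natural route is through an energy estimate on $\Ep{\obj(w_t) - \obj_*}$ obtained via Itô's formula, exploiting Assumptions \ref{assumption}(b) and (c) to close the estimate.

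First I would apply Itô's formula to $t \mapsto \obj(w_t)$ along \eqref{eq:langevin}, giving formally
\[
d\,\obj(w_t) = -\normsq{\nabla\obj(w_t)}\,dt + \sqrt{2\sigma}\,\inner{\nabla\obj(w_t)}{dB_t} + \sigma\Delta\obj(w_t)\,dt.
\]
To make this rigorous without knowing a priori that the martingale term has zero mean, I would work with a localizing sequence of stopping times $\tau_n = \inf\{t : \norm{w_t} \geq n\}$ (or $\inf\{t: \obj(w_t) \geq n\}$), so that on $[0, t\wedge\tau_n]$ the stopped stochastic integral is a genuine martingale with zero expectation. Taking expectations of the stopped equation yields
\[
\Ep{\obj(w_{t\wedge\tau_n}) - \obj_*} + \Ep{\int_0^{t\wedge\tau_n}\normsq{\nabla\obj(w_s)}\,ds} = \Ep{\obj(w_0) - \obj_*} + \sigma\,\Ep{\int_0^{t\wedge\tau_n}\Delta\obj(w_s)\,ds}.
\]
Now I would use \eqref{eq:ass2}, $|\Delta\obj(w)| \leq \l_2(\obj(w)-\obj_*) + \l_3$, to bound the last term by $\sigma\l_2\Ep{\int_0^{t\wedge\tau_n}(\obj(w_s)-\obj_*)\,ds} + \sigma\l_3 t$, and use \eqref{eq:ass1} in the form $\obj(w_s)-\obj_* \leq \frac{1}{\l_1}\normsq{\nabla\obj(w_s)}$ to absorb this into the gradient term on the left — here the hypothesis $\l_1 > \sigma\l_2$ is exactly what makes the coefficient $(1 - \sigma\l_2/\l_1)$ positive. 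This gives, after dropping the nonnegative $\Ep{\obj(w_{t\wedge\tau_n})-\obj_*}$ term,
\[
\Big(1 - \tfrac{\sigma\l_2}{\l_1}\Big)\Ep{\int_0^{t\wedge\tau_n}\normsq{\nabla\obj(w_s)}\,ds} \leq \Ep{\obj(w_0)-\obj_*} + \sigma\l_3 t,
\]
and letting $n\to\infty$ via monotone convergence yields $\Ep{\int_0^t \normsq{\nabla\obj(w_s)}\,ds} < \infty$ uniformly on bounded time intervals. This is precisely the condition ensuring $t\mapsto \int_0^t \nabla\obj(w_s)\,dB_s$ is a true (vector-valued) martingale, hence has expectation zero.

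The main obstacle — and the only genuinely delicate point — is justifying the localization argument rigorously, i.e. passing from the stopped identity back to the unstopped one. One must argue that $\tau_n \to \infty$ almost surely (which follows from non-explosion of the strong solution guaranteed by Assumption \ref{assumption2}), and then invoke monotone convergence on the $\int_0^{t\wedge\tau_n}\normsq{\nabla\obj}\,ds$ terms and control the sign of $\Ep{\obj(w_{t\wedge\tau_n}) - \obj_*} \geq 0$; the boundedness of $\Ep{\obj(w_0)-\obj_*}$ is implicitly assumed on the initial law. Condition \eqref{eq:ass3} enters precisely here to give an alternative, perhaps cleaner, direct bound: since $\norm{\nabla\obj(w)} \leq H(\obj(w)-\obj_*)$ with $H$ continuous, once one has an $L^1$-in-time bound on $\obj(w_s)-\obj_*$ one immediately controls the second moment of $\nabla\obj(w_s)$ on compact sets, sidestepping delicate uniform-integrability concerns. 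I would present the localization version as the main line and remark that \eqref{eq:ass3} provides the quantitative $L^2$ control needed for the Itô isometry to be finite.
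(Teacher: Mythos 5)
Your proof is correct and follows essentially the same route as the paper: apply It\^o's formula to $\obj(w_t)$, localize with a stopping time so the stopped stochastic integral has zero mean, close the energy estimate using \eqref{eq:ass1}--\eqref{eq:ass2} and the condition $\ell_1 > \sigma\ell_2$, and pass to the limit to obtain the $L^2$ bound $\Ep{\int_0^t\normsq{\nabla\obj(w_s)}\,ds}<\infty$. The only small divergence is that the paper stops at the level set $\tau_R = \inf\{t:\obj(w_t)-\obj_*\geq R\}$, for which \eqref{eq:ass3} is \emph{essential} (not merely an alternative) to bound $\nabla\obj$ on the stopped interval, and it proves $\mathbb{P}(\tau_R\leq t)\to 0$ directly from the energy estimate (via Markov's inequality) rather than invoking non-explosion from Assumption \ref{assumption2} as you do; the paper then uses Fatou where you use monotone convergence, which are interchangeable here.
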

\begin{proof}
   For the proof, we need the following sufficient condition ~(by \cite[Definition 3.1.4, Theorem 3.2.1]{oksendal2013stochastic})
\begin{equation}
    \Ep{\int_0^{t}\norm{\nabla \obj(w_s)}^2ds}<\infty,
\end{equation}
which we verify as follows.  Define the stopping time $\tau_R:=\inf\{t\geq 0:\obj(w_t)-\obj_*\geq R\}$, then we have
    \begin{equation}
        \obj(w_{t\wedge\tau_R})-\obj(w_0)=-\int_0^{t\wedge\tau_R}\norm{\nabla \obj(w_s)}^2ds+\sigma\int_0^{t\wedge\tau_R}\Delta \obj(w_s)ds+\sqrt{2\sigma}\int_0^{t\wedge\tau_R}\nabla \obj(w_s)dB_s,
    \end{equation}
(for this equality, see for example \cite[Exercise 4.9]{oksendal2013stochastic}), take expectation from both side, we have
\begin{equation}\label{eq:a17}
    \Ep{\obj(w_{t\wedge\tau_R})-\obj_*}-\Ep{\obj(w_0)-\obj_*}=-\Ep{\int_0^{t\wedge\tau_R}\norm{\nabla \obj(w_s)}^2ds}+\sigma\Ep{\int_0^{t\wedge\tau_R}\Delta \obj(w_s)ds},
\end{equation}
this is due to the following fact: by \eqref{eq:ass3} and the definition of $\tau_R$, we have $\norm{\nabla \obj(w_s)}\leq H(\obj(\revised{w_s})-\obj_*)$ is bounded, for $s\in [0,t\wedge\tau_R]$, so 
\begin{equation}
    \Ep{\int_0^t\norm{\nabla\obj(w_s)}^21_{\tau_R\wedge t}(s)ds}<\infty,
\end{equation}
here $1_{\tau_R\wedge t}(s)=1$ if $s\leq \tau_R\wedge t$ and $1_{\tau_R\wedge t}(s)=0$ otherwise. Thus by \cite[Definition 3.1.4, Theorem 3.2.1]{oksendal2013stochastic}, we have
\begin{equation}
    \Ep{\int_0^{t\wedge\tau_R}\nabla \obj(w_s)dB_s}=\Ep{\int_0^{t}\nabla \obj(w_s)1_{t\wedge\tau_R}(s) dB_s}=0.
\end{equation} 
Choose $\sigma$ such that $\ell_1>\sigma\ell_2$ and by \eqref{eq:ass1}-\eqref{eq:ass2} in Assumption \ref{assumption} we have 
\begin{equation}
    \begin{aligned}
        \Ep{\obj(w_{t\wedge\tau_R})-\obj_*}&\leq \Ep{\obj(w_0)-\obj_*}-(\l_1-\sigma\l_2){\int_0^{t\wedge \tau_R}\Ep{\obj(w_s)-\obj_*}ds}+\sigma\l_3 t\\
        &\leq \Ep{\obj(w_0)-\obj_*}+\sigma\l_3 t,
    \end{aligned}
\end{equation}
thus 
\begin{equation}
   \mathbb P(\tau_R\leq t)R\leq \mathbb P(\tau_R\leq t)\Ep{\obj(w_{\tau_R})-\obj_*\mid \tau_R\leq t}\leq \Ep{\obj(w_{t\wedge\tau_R})-\obj^*}\leq \Ep{\obj(w_0)-\obj_*}+\sigma\l_3 t,
\end{equation}
the second inequality in the above is due to
\begin{equation}
    \begin{aligned}
        \Ep{\obj(w_{t\wedge\tau_R})-\obj_*}&=\mathbb P(\tau_R\leq t)\Ep{\obj(w_{\tau_R})-\obj_*\mid \tau_R\leq t}+\mathbb{P}(\tau_R>t)\Ep{\obj(w_t)-\obj_*\mid \tau_R>t}\\
        &\geq \mathbb P(\tau_R\leq t)\Ep{\obj(w_{\tau_R})-\obj_*\mid \tau_R\leq t}.
    \end{aligned}
\end{equation}
Thus 
\begin{equation}
    \lim_{R\to\infty} \mathbb P(\tau_R\leq t)=0,\quad\forall t>0,
\end{equation}
which means $\tau_R\wedge t\to t$ almost surely for $R\to \infty$, thus $\int_0^{t\wedge\tau_R}\norm{\nabla \obj(w_s)}^2ds\to \int_0^{t}\norm{\nabla \obj(w_s)}^2ds$ almost surely. Again, by equality \eqref{eq:a17} and the assumption, we can derive
\begin{equation}
    \Ep{\int_0^{t\wedge\tau_R}\norm{\nabla \obj(w_s)}^2ds}\leq \frac{\revised{\Ep{\obj(w_0)-\obj_*+\sigma \ell_3 t\wedge\tau_R}}}{1-\frac{\ell_2}{\ell_1}\sigma}\leq \frac{\Ep{\obj(w_0)-\obj_*}+\sigma \ell_3 t}{1-\frac{\ell_2}{\ell_1}\sigma},
\end{equation}
so let $R\to\infty$, and, by Fatou's lemma, we have
\begin{equation}
    \Ep{\int_0^{t}\norm{\nabla \obj(w_s)}^2ds}\leq \frac{\Ep{\obj(w_0)-\obj_*}+\sigma\ell_3 t}{1-\frac{\ell_2}{\ell_1}\sigma}<\infty.
\end{equation}
Hence, by \cite[Definition 3.1.4, Theorem 3.2.1]{oksendal2013stochastic} we conclude that $\Ep{\int_0^{t}\nabla \obj(w_s)dB_s} =0$. 
\end{proof}

\begin{proposition}\label{prop1}
Assume that $\obj$ fulfills Assumption \ref{assumption} and $\sigma>0$ is such that $\l_1>\sigma\l_2$. Then 
\begin{equation}\label{eq:decay}
  \Ep{\obj(w_t)-\obj_*} \leq \Ep{\obj(w_0)-\obj_*} e^{-(\l_1-\sigma\l_2)t} + \sigma\l_3 \frac{1- e^{-(\l_1-\sigma\l_2)t}}{\l_1-\sigma\l_2}, 
\end{equation}
which implies
\begin{equation}\label{eq:conc}
    \Ep{\obj(w_t)-\obj_*}\leq C \frac{\sigma\l_3}{\l_1-\sigma\l_2},
\end{equation}
for $t>0$ large enough.
\end{proposition}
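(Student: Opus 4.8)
The plan is to build on Lemma~\ref{lem:vanish} and reduce the whole statement to a scalar Grönwall argument for the function $u(t):=\Ep{\obj(w_t)-\obj_*}$, which is exactly the quantity appearing in \eqref{eq:decay}--\eqref{eq:conc} (for deterministic $w_0$ one has $u(0)=\obj(w_0)-\obj_*$).

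First I would collect the facts already produced inside the proof of Lemma~\ref{lem:vanish}: the It\^o expansion
\[
\obj(w_t)-\obj(w_0)=-\int_0^{t}\norm{\nabla\obj(w_s)}^2ds+\sigma\int_0^{t}\Delta\obj(w_s)ds+\sqrt{2\sigma}\int_0^{t}\nabla\obj(w_s)dB_s
\]
is valid; the stochastic integral has zero expectation by Lemma~\ref{lem:vanish}; $\Ep{\int_0^t\norm{\nabla\obj(w_s)}^2ds}<\infty$; and, passing $R\to\infty$ in the stopped estimate by Fatou's lemma, $u(t)\le u(0)+\sigma\l_3 t<\infty$ for all $t\ge 0$, so $u$ is locally bounded. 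Combining this last bound with \eqref{eq:ass2} gives $\Ep{\int_0^t|\Delta\obj(w_s)|ds}\le \l_2\int_0^t u(s)\,ds+\l_3 t<\infty$, so Tonelli/Fubini permit interchanging expectation and time integral. Taking expectations in the It\^o expansion and dropping the martingale term, I obtain
\[
u(t)=u(0)-\int_0^{t}\Ep{\norm{\nabla\obj(w_s)}^2}ds+\sigma\int_0^{t}\Ep{\Delta\obj(w_s)}ds .
\]

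Next, since the integrand is locally integrable, $u$ is absolutely continuous on $[0,\infty)$, and for almost every $t$,
\[
u'(t)=-\Ep{\norm{\nabla\obj(w_t)}^2}+\sigma\Ep{\Delta\obj(w_t)}\le -\l_1 u(t)+\sigma\big(\l_2 u(t)+\l_3\big)=-(\l_1-\sigma\l_2)u(t)+\sigma\l_3,
\]
where I used the PL inequality \eqref{eq:ass1} (rewritten as $\norm{\nabla\obj(w)}^2\ge\l_1(\obj(w)-\obj_*)$) for the first term and \eqref{eq:ass2} together with $\Delta\obj\le|\Delta\obj|$ for the second. Writing $a:=\l_1-\sigma\l_2>0$ and multiplying by $e^{at}$ (the product $t\mapsto e^{at}u(t)$ is again absolutely continuous), this reads $(e^{at}u(t))'\le \sigma\l_3 e^{at}$ a.e.; integrating from $0$ to $t$ gives $e^{at}u(t)-u(0)\le \sigma\l_3(e^{at}-1)/a$, which is precisely \eqref{eq:decay}. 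Letting $t\to\infty$ yields $\limsup_{t\to\infty}u(t)\le \sigma\l_3/(\l_1-\sigma\l_2)$, hence \eqref{eq:conc} for any fixed $C>1$ and all $t$ large enough that $u(0)e^{-at}\le (C-1)\sigma\l_3/a$.

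I expect the only delicate points to be the measure-theoretic bookkeeping — the exchange of $\Ep{\cdot}$ with $\int_0^t(\cdot)\,ds$, the a priori finiteness $u(t)<\infty$, and the absolute continuity of $u$ — but all of these are essentially already delivered by the stopping-time argument in the proof of Lemma~\ref{lem:vanish}, so what remains is the routine linear Grönwall estimate above. If one prefers to avoid differentiating $u$, an equivalent route is to keep the integral inequality $u(t)\le u(0)+\sigma\l_3 t-a\int_0^t u(s)\,ds$ and invoke the corresponding integral form of Grönwall's lemma directly.
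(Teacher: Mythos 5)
Your proposal is correct and follows essentially the same route as the paper: Itô's formula for $\obj(w_t)$, elimination of the martingale term via Lemma~\ref{lem:vanish}, the differential inequality from \eqref{eq:ass1}--\eqref{eq:ass2}, and Grönwall. The main added value in your write-up is that you spell out why the passage from the integral identity to the differential inequality is legitimate (local integrability of $\Ep{|\Delta\obj(w_s)|}$, absolute continuity of $u$, a.e.\ differentiation), whereas the paper states this more tersely; this is sound bookkeeping, not a different argument.
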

\begin{proof} By It\^o's formula, we have
\begin{equation}
    d \obj(w_t)=-\norm{\nabla\obj(w_t)}^2dt+\sigma\Delta \obj(w_t)dt+\sqrt{2\sigma}\nabla\obj(w_t)dB_t.
\end{equation}
We  first reformulate the latter equation in integral form 
 \begin{equation}
        \obj(w_{t})-\obj(w_0)=-\int_0^{t}\norm{\nabla \obj(w_s)}^2ds+\sigma\int_0^{t}\Delta \obj(w_s)ds+\sqrt{2\sigma}\int_0^{t}\nabla \revised{\mathcal{L}(w_s)}dB_s.
    \end{equation}
Then we take the expectation
 \begin{equation}\label{eq:8}
       \Ep{ \obj(w_{t})-\obj(w_0)}=-\Ep{\int_0^{t}\norm{\nabla \obj(w_s)}^2ds}+\sigma\Ep{\int_0^{t}\Delta \obj(w_s)ds}+\sqrt{2\sigma}\Ep{\int_0^{t}\nabla \obj(w_s)dB_s}.
    \end{equation}
By Lemma \ref{lem:vanish} the last term $\Ep{\int_0^{t}\nabla \obj(w_s)dB_s} =0$ vanishes.
Then by differentiating in time in \eqref{eq:8} and using   Assumption \ref{assumption}, we have
\begin{equation}
    \begin{aligned}
        \frac{d}{dt}\Ep{\obj(w_t)-\obj_*}&\leq \Ep{-\norm{\nabla\obj(w_t)}^2+\sigma\Delta\obj(w_t)}\\
        &\leq -\l_1\Ep{\obj(w_t)-\obj_*}+\Ep{\sigma\Delta\obj(w_t)}\\
        &\leq -(\l_1-\sigma\l_2)\Ep{\obj(w_t)-\obj_*}+\sigma\l_3.
    \end{aligned}
\end{equation}
Then by Gr\"onwall's lemma we obtain
\begin{equation}
  \Ep{\obj(w_t)-\obj_*} \leq \Ep{\obj(w_0)-\obj_*} e^{-(\l_1-\sigma\l_2)t} + \sigma\l_3 \frac{1- e^{-(\l_1-\sigma\l_2)t}}{\l_1-\sigma\l_2}, 
\end{equation}
which, for $\sigma$ such that $\l_1>\sigma\l_2$, implies,
\begin{equation}
    \Ep{\obj(w_t)-\obj_*}\leq C \frac{\sigma\l_3}{\l_1-\sigma\l_2},
\end{equation}
 for $t>0$ large enough.
\end{proof}
\begin{remark}\label{quadr}
For instance, if $\l_1\gg 1$, and $0<\sigma \ll 1$, then the above estimates  ensure that the process concentrates around $\mathcal{W}^*$ at the noise level $\sigma>0$. 
For the model \eqref{eq:example} the estimates \eqref{eq:decay}-\eqref{eq:conc} give
$$
\Ep{\obj(w_t)-\obj_*} \leq \Ep{\obj(w_0)-\obj_*} e^{-4\sigma_n(A^T)^2t} + \sigma \|A\|_F^2 \frac{1- e^{-4\sigma_n(A^T)^2t}}{4\sigma_n(A^T)^2}, 
$$
and
$$
\Ep{\obj(w_t)-\obj_*}\leq C  \frac{\|A\|_F^2}{4 \sigma_n(A^T)^2}\sigma,
$$
for $t\gg0$ large enough. These estimates are sharp in the sense that there are objective functions $\obj$ that equate the estimates: assume that $A=[ \quad I_n \quad | \quad 0 \quad] \in \mathbb R^{n \times d}$, then all the estimates in the proof of Proposition \ref{prop1} are actually identities. In the more general case, one can consider without loss of generality $A=[ \quad \Sigma \quad | \quad 0 \quad] \in \mathbb R^{n \times d}$, where $\Sigma =\operatorname{diag}(\sigma_1,\dots, \sigma_n)\in \mathbb R^{n \times n}$ is a diagonal matrix with positive diagonal values $\sigma_i>0$, then from \eqref{eq:8} one can easily derive the lower bound estimate
$$
\frac{d}{dt} \Ep{\obj(w_t)-\obj_*} \geq -4 \sigma_1^2 \Ep{\obj(w_t)-\obj_*} + 2 \sigma \sum_{i=1}^n \sigma_i^2,
$$
yielding
$$
\Ep{\obj(w_t)-\obj_*} \geq \Ep{\obj(w_0)-\obj_*} e^{-4 \sigma_1^2 t} + 2 \sigma \sum_{i=1}^n \sigma_i^2 \left( \frac{1 - e^{-4 \sigma_1^2 t}}{4 \sigma_1^2} \right).
$$
Notice that $\sum_{i=1}^n \sigma_i^2=\|A\|_F^2$. Hence, in this case,
$$
C \frac{\|A\|_F^2}{4 \sigma_1(A^T)^2}\sigma \leq \Ep{\obj(w_t)-\obj_*}\leq C  \frac{\|A\|_F^2}{4 \sigma_n(A^T)^2}\sigma,
$$
for $t\gg0$ large enough for a suitable constant $C>0$. One can also notice that all the constants and relevant quantities in the estimates do depend on the dimension $n$, but not on the dimension $d \geq n$.
\end{remark}

	\revised{
		Under the PL condition, the objective satisfies the inverse growth condition $\obj(w)-\obj_*\geq \frac{\ell_1}{4}\dist{w}^2$ ~( see \cite[Theorem 2 and Appendix A]{karimi2016linear}). This inverse growth condition and Markov inequality allow to
		 derive the following result of concentration around minimizers.
         %that: for any $\epsilon>0$, we can get a $R_\epsilon>0$  that only depends on $\l_1,\l_2,\l_3,\sigma,\rho_0$ such that 
		%\begin{equation}\label{eq:concentration}
%			\rho_t(M_\epsilon)\geq 1-\epsilon,\quad t \text{ large enough},
%		\end{equation}
%		which means the process will concentrate on the set $M_\epsilon:=\{w:\dist{w}\leq R_\epsilon\}$~(in the next, we will always assume that $M_\epsilon$ is connected which will be used in Proposition \ref{prop:lgconv}), see the next proposition for a formal statement.
		
		\begin{proposition}[Concentration of the process]\label{prop:concentration}
			Under Assumption \ref{assumption}, if $\ell_1 > \sigma \ell_2$, then for any 
			\[
			t \geq \max\Big\{-\frac{\log\Big(\frac{\sigma \ell_3}{(\ell_1 - \sigma \ell_2) \, \mathbb{E}[\mathcal{L}(w_0)-\mathcal{L}_*]}\Big)}{\ell_1 - \sigma \ell_2},0 \Big\},
			\]
			we have that  for any $\varepsilon>0$,
			\begin{equation}
				\rho_t\big(\mathbf{D}(w,\mathcal{W}^*) \leq \varepsilon\big) \geq 1 - \frac{8 \sigma \ell_3}{(\ell_1 - \sigma \ell_2) \ell_1 \varepsilon^2}.
			\end{equation}
		\end{proposition}
		
		\begin{proof}
			From estimate \eqref{eq:decay} and the condition on $t$, we have
			\[
			\mathbb{E}[\mathcal{L}(w_t) - \mathcal{L}_*] \leq \frac{2 \sigma \ell_3}{\ell_1 - \sigma \ell_2}.
			\]
			
			Applying the inverse growth condition gives
			\[
			\frac{\ell_1}{4} \, \mathbb{E}[\mathbf{D}(w_t,\mathcal{W}^*)^2] \leq \mathbb{E}[\mathcal{L}(w_t) - \mathcal{L}_*] \leq \frac{2 \sigma \ell_3}{\ell_1 - \sigma \ell_2},
			\]
			so that
			\[
			\mathbb{E}[\mathbf{D}(w_t,\mathcal{W}^*)^2] \leq \frac{8 \sigma \ell_3}{(\ell_1 - \sigma \ell_2)\ell_1}.
			\]
			
			Finally, applying Markov's inequality yields
			\[
			\rho_t\big(\mathbf{D}(w,\mathcal{W}^*) \leq \varepsilon\big) \geq 1 - \frac{8 \sigma \ell_3}{(\ell_1 - \sigma \ell_2)\ell_1 \varepsilon^2},
			\]
			for any $\varepsilon>0$.
		\end{proof}
        
	In the following we assume that $M_\varepsilon:=\{w:\dist{w}\leq \varepsilon\}$~ is connected, which will be used in Proposition \ref{prop:lgconv}.		
	
	}

\section{Convergence to minimizers under a local Polyak-{\L}ojasiewicz condition}\label{sec:locaPL}

The square loss \eqref{eq:train2} for training neural networks of the type \eqref{eq:NN} described below does not fulfill the global PL condition \eqref{eq:ass1} in general. Yet, it fulfills a local version \eqref{eq:PLl}, elaborated on below, see \cite{misha3}. In the same latter paper, the authors prove that this is enough for the Stochastic Descent method with mini-batches to converge to global minimizers. Let us explain the details.

Consider an $L$-layered (feedforward) neural network $f(w;x)$, with parameters $w$ and input $x$, defined recursively as follows:
\begin{eqnarray}
y^{(0)} &=& x \nonumber \\
y^{(l)} &=& \sigma_l \left ( \frac{1}{\sqrt m_{l-1}} W^{(l)} y^{(l-1)}\right ), \quad \forall l =1,2,\dots, L+1, \nonumber\\
f(w;x)&=& y^{(L+1)}. \label{eq:NN}
\end{eqnarray}
Here $m_l$ is the width (i.e. the number of neurons) of the $l^{th}$-layer, $y^{(l)} \in \mathbb{R}^{m_l}$ denotes the vectors of the $l^{th}$-hidden layer neurons, $w=\{W^{(1)},W^{(2)}\dots,W^{(L)},W^{(L+2)} \}$ denotes the collection of the parameters (or weights) $W^{(l)} \in 
\mathbb{R}^{m_l\times m_{l-1}}$ of each layer, and $\sigma_l$ is the activation function, e.g., a sigmoid, $\tanh$, or a linear activation, applied componentwise. In a typical supervised learning task, given a training dataset of size $N$, $\mathcal D=\{(x_i,y_i)\}_{i=1}^N$, and a parametric family of models $f(w;x)$, e.g., a neural network as described above, one aims to find a model with parameter vector $w^*$ that fits the training data, i.e., 
\begin{equation}\label{eq:train1}
 f(w^*;x_i) \approx y_i, \quad i=1,2,\dots,N.   
\end{equation}
By considering the aggregated map $(\mathcal F(w))_i = f(w;x_i)$ one can enforce \eqref{eq:train1} by  minimizing the square loss
\begin{equation}\label{eq:train2}
\obj(w) = \| \mathcal F(w) -y \|^2 = \frac{1}{N}\sum_{i=1}^N \ell_i(w) = \frac{1}{N}\sum_{i=1}^N |f(w;x_i) -y_i|^2,
\end{equation}
where $\ell_i(w)=|f(w;x_i) -y_i|^2$.\\

\subsection{Mathematical Context}

We consider a  differentiable objective function \( \mathcal{L}: \mathbb{R}^d \to \mathbb{R}  \) that admits minimizers, for example \eqref{eq:train2} to model the loss function in deep learning training, and examine two classical analytical frameworks for studying its optimization:

\paragraph{Gradient descent and Polyak-{\L}ojasiewicz (PL) conditions.} 
   In particular for objective functions
    \begin{equation*}
\obj(w) = \frac{1}{N} \sum_{i=1}^N \ell_i(w),
\end{equation*}
 the SGD step then reads 
\begin{equation}\label{eq:sgd1}
w^{(k+1)} = w^{(k)} - \frac{\Delta t}{h} \sum_{j=1}^h \nabla \ell_{i_j}(w^{(k)}), \quad w^{(0)} = w_0,
\end{equation}
where $i_j$ are picked uniformly at random in $\{ 1, \dots, N \}$ and $h\ll N$. (\revised{We} recall that the collection $B_k=\{i_1,\dots,i_h\}$ is called a mini-batch in the deep learning literature.) In practice mini-batches encode picking subsets of input-output training data. While a \emph{global} PL condition of the form \eqref{eq:PLg} will not hold in general for the square loss \eqref{eq:train1} for training neural networks of the type \eqref{eq:NN}, results in \cite{misha3} ensure that the square loss does fulfill with high probability the \emph{local} PL condition
\begin{equation} \label{eq:PLl}
        \frac{1}{2} \|\nabla \mathcal{L}(w)\|^2 \geq \mu \mathcal{L}(w), \qquad \mu > 0, \quad \forall w \in B_R(w_0), 
    \end{equation}
    for $w_0$ drawn at random, $R>0$ is an arbitrary radius, and the minimal number $m=\min \{m_l\}_{l=1}^L$ of neurons per layer scales as 
    \begin{equation}\label{eq:mscal}
           m= m(R)=\mathcal O \left ( d R^{6L +2} \right ).
    \end{equation}
This by now well-known result is based essentially on showing that, up to a final  nonlinear transformation (depending on the activation function $\sigma_{L+1}$ of the last layer), for $m\to +\infty$ the model $f(w,x)$ tends to become linear and therefore the deviation of $\obj$ from being convex can be controlled. We refer to \cite{misha1,misha2,misha3} for details. 

By demonstrating that the iterates of the Stochastic Gradient Descent algorithm \eqref{eq:sgd1} remain within the ball $B_R(w_0)$ with high probability, the authors of \cite{misha3} establish the convergence of \eqref{eq:sgd1} to the optimal parameters $w^*$. These findings suggest that, for overparameterized neural networks, Stochastic Gradient Descent with mini-batches tends to converge to global optima, a well-known phenomenon that is indeed observed empirically.

\subsection{Langevin dynamics under a local PL assumption}

In this section, we show that the Langevin dynamics \eqref{eq:langevin} converge to global minimizers under local formulations of the assumptions \eqref{eq:ass1}-\eqref{eq:ass3}:
\begin{assumption}\label{assumption3}
    For $R>0$, the objective function $\obj$ satisfies the following conditions:
    \begin{align}
   &\obj(w)-\obj_*\leq \frac{1}{\l_1'} \norm{\nabla\obj(w)}^2 ,\quad {\l_1'>0}, \mbox{ for all } w \in B_R(w_0),\label{eq:ass1-1} \\
            &|\Delta \obj(w)|\leq \l_2',\quad \l_2'>0,  \mbox{ for all } w \in B_R(w_0), \label{eq:ass2-2}\\
            &\norm{\nabla\obj(w)}^2\leq \ell_3'(1+\| w\|^2),\quad\text{ for all }  w \in B_R(0)  \text{ and } \ell_3'>0\label{eq:ass3-3}.
\end{align}

\end{assumption}

\begin{remark}\label{rem:arbR}
   We reiterate that the square loss \eqref{eq:train1} does fulfill with high probability Assumption \ref{assumption3} for any $R>0$ and $\obj_*=0$  as long as $w_0$ is drawn at random and $m=\min\{m_l\}_{l=1}^L$ in the model \eqref{eq:NN} is large enough relative to $R$ as in \eqref{eq:mscal}. See \cite[Theorem 4]{misha3} and \cite[Theorem 5]{misha3}.  \revised{The property that  $\ell_1', \ell_2'$ can be assumed uniform with respect to $R>0$  arbitrarily large  (thanks \cite[Theorem 4]{misha3}) is crucial for the derivation of Proposition \ref{prop:lgconv}.}
\end{remark}

From now we assume without loss of generality that $w_0=0$ to simplify notations.

\begin{proposition}\label{prop:lgconv}
 Assume  that   $\obj$ fulfills Assumption \ref{assumption3} for $R>0$ \revised{sufficiently large and uniform $\ell_1', \ell_2'$}. For any $T>0$, define
 $$
\Omega_{R,T}:=\{\omega: \sup_{t \in [0,T]} \|w_t(\omega)\| \leq R\}.
$$
Then
\begin{equation}
   \begin{aligned}
       \Ep{(\obj(w_t)-\obj_*)|\Omega_{R,T}}&\leq  \frac{(\obj(0)-\obj_*) e^{-\l_1' t} + \sigma\l_2' \frac{1- e^{-\l_1't}}{\l_1'}}{\mathbb P(\Omega_{R,T})}, \quad t \in [0,T],
   \end{aligned}
\end{equation}
 Moreover, for any $\epsilon>0$, $\sigma>0$, $\delta>0$, and for $T$ large enough, we have
 \begin{equation}
     \mathbb P(\{ \obj(w_T)-\obj_* \leq \epsilon \}) \geq 1- \left (\frac{2 \sigma \ell_2'/\ell_1'}{\epsilon}+\delta \right ).
 \end{equation}
\end{proposition}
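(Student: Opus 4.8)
The plan is to mimic the argument of Lemma \ref{lem:vanish} and Proposition \ref{prop1}, but carried out on the stopped process that stays inside $B_R(0)$, so that the local version of Assumption \ref{assumption} (namely Assumption \ref{assumption3}) can be invoked. First I would introduce the exit time $\tau_R := \inf\{t \geq 0 : \|w_t\| \geq R\}$, so that on the event $\{\tau_R \geq T\}$ we have $w_t \in B_R(0)$ for all $t \in [0,T]$ and in particular $\Omega_{R,T} = \{\tau_R \geq T\}$ up to a null set. By It\^o's formula applied to $t \mapsto \obj(w_{t \wedge \tau_R})$ and the growth bound \eqref{eq:ass3-3} — which guarantees $\Ep{\int_0^{T \wedge \tau_R} \|\nabla \obj(w_s)\|^2 ds} < \infty$ since $\|w_s\| \leq R$ on this interval — the stochastic integral $\int_0^{t \wedge \tau_R} \nabla \obj(w_s) dB_s$ is a true martingale with zero expectation, exactly as in Lemma \ref{lem:vanish}. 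This gives
\begin{equation}
\Ep{\obj(w_{t \wedge \tau_R}) - \obj_*} = \Ep{\obj(0) - \obj_*} - \Ep{\int_0^{t \wedge \tau_R} \|\nabla \obj(w_s)\|^2 ds} + \sigma \Ep{\int_0^{t \wedge \tau_R} \Delta \obj(w_s) ds}.
\end{equation}

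Next I would turn this integral identity into a differential inequality. Writing $\phi(t) := \Ep{(\obj(w_{t \wedge \tau_R}) - \obj_*)}$, differentiating in $t$, and using that on $\{s \leq \tau_R\}$ both \eqref{eq:ass1-1} and \eqref{eq:ass2-2} hold (so $\|\nabla \obj(w_s)\|^2 \geq \l_1'(\obj(w_s) - \obj_*)$ and $|\Delta \obj(w_s)| \leq \l_2'$), one obtains on the set where the derivative exists
\begin{equation}
\frac{d}{dt}\phi(t) \leq -\l_1' \Ep{(\obj(w_{t}) - \obj_*)\mathbf{1}_{t \leq \tau_R}} + \sigma \l_2' \mathbb P(\tau_R \geq t) \leq -\l_1' \phi(t) + \sigma \l_2',
\end{equation}
where in the first inequality I used $\obj - \obj_* \geq 0$ to drop the contribution of $\{s > \tau_R\}$ from the gradient term (it only helps the inequality), and in the second that $\mathbb P(\tau_R \geq t) \leq 1$. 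Gr\"onwall then yields $\phi(t) \leq (\obj(0) - \obj_*) e^{-\l_1' t} + \sigma \l_2' \frac{1 - e^{-\l_1' t}}{\l_1'}$. Since $\obj - \obj_* \geq 0$, restricting the expectation defining $\phi(t)$ to the event $\Omega_{R,T} \subseteq \{t \wedge \tau_R = t\}$ (for $t \le T$) gives $\mathbb P(\Omega_{R,T}) \, \Ep{(\obj(w_t) - \obj_*) \mid \Omega_{R,T}} \leq \phi(t)$, which is the first claimed bound after dividing by $\mathbb P(\Omega_{R,T})$.

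For the second, quantitative statement: by Remark \ref{rem:arbR} we may take $R$ as large as we like at the cost only of enlarging the network width, and a standard estimate (Gronwall on $\Ep{\|w_{t\wedge\tau_R}\|^2}$ using the linear growth \eqref{eq:ass3-3} together with the boundedness of the drift on $B_R$, or a Markov/maximal-inequality argument on the continuous paths) shows $\mathbb P(\Omega_{R,T}^c) \to 0$ as $R \to \infty$, uniformly over $t$ in any fixed window; choose $R$ so that $\mathbb P(\Omega_{R,T}^c) \leq \delta/2$ or so. Then combine: on $\Omega_{R,T}$, Markov's inequality applied to the conditional expectation bound gives, for $T$ large enough that the transient term $(\obj(0)-\obj_*)e^{-\l_1' T}$ is negligible, $\mathbb P(\{\obj(w_T) - \obj_* > \epsilon\} \cap \Omega_{R,T}) \leq \frac{\phi(T)}{\epsilon} \leq \frac{2\sigma \l_2'/\l_1'}{\epsilon}$ (absorbing the vanishing transient into the constant), and adding $\mathbb P(\Omega_{R,T}^c) \leq \delta$ gives the stated $1 - (\frac{2\sigma\l_2'/\l_1'}{\epsilon} + \delta)$. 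The main obstacle I anticipate is the bookkeeping around the stopped process: making sure the stochastic-integral-vanishes step is rigorously justified purely from the local assumption \eqref{eq:ass3-3} on $B_R(0)$ (rather than a global growth bound), and controlling the indicator terms $\mathbf{1}_{s \leq \tau_R}$ when passing from the integral identity to the differential inequality so that the discarded terms have the right sign. The probability bound $\mathbb P(\Omega_{R,T}) \to 1$ as $R \to \infty$ must also be established; this is where one genuinely uses that the local assumption holds for arbitrary $R$ via \eqref{eq:mscal}.
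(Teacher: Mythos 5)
Your overall strategy — exit time from $B_R(0)$, It\^o on the stopped process, localizing the arguments of Proposition \ref{prop1} to get an exponential-decay bound, then a Bayes/Markov decomposition over $\Omega_{R,T}$ and its complement together with the Gr\"onwall estimate on $\mathbb{E}[\|w_{t\wedge\tau_R}\|^2]$ to control $\mathbb{P}(\Omega_{R,T}^c)$ — is exactly what the paper does. However, the Gr\"onwall step as you wrote it contains a genuine sign error.

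You set $\phi(t) := \mathbb{E}[\obj(w_{t\wedge\tau_R}) - \obj_*]$ and claim
\begin{equation}
-\ell_1'\,\mathbb{E}\left[(\obj(w_t)-\obj_*)\mathbf{1}_{t\leq\tau_R}\right] \;\leq\; -\ell_1'\,\phi(t).
\end{equation}
This is backwards. Writing $g(t):=\mathbb{E}\left[(\obj(w_t)-\obj_*)\mathbf{1}_{\tau_R>t}\right]$ and $\psi(t):=\mathbb{E}\left[(\obj(w_{\tau_R})-\obj_*)\mathbf{1}_{\tau_R\leq t}\right]$, you have $\phi(t)=g(t)+\psi(t)$ with $\psi\geq 0$, hence $g(t)\leq\phi(t)$ and therefore $-\ell_1' g(t)\geq-\ell_1'\phi(t)$: the gradient term gives you a bound \emph{from below} by $-\ell_1'\phi(t)$, not from above, so you cannot close the loop for $\phi$. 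Nor should you expect to: if $\obj$ is large on $\partial B_R(0)$, the absorbed mass $\psi(t)$ can grow and $\phi(t)$ need not obey the claimed exponential decay.

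The quantity to track is $g(t)$, which is what actually controls $\mathbb{E}\left[(\obj(w_t)-\obj_*)\mathbf{1}_{\Omega_{R,T}}\right]$ for $t\leq T$, since $\Omega_{R,T}\subseteq\{\tau_R>t\}$. From the It\^o identity in integral form, plus \eqref{eq:ass1-1} and \eqref{eq:ass2-2} on $B_R(0)$,
\begin{equation}
\phi(t)\;\leq\;\phi(0)-\ell_1'\int_0^t g(s)\,ds+\sigma\ell_2' t,
\end{equation}
and combining with $g(t)\leq\phi(t)$ gives the integral inequality $g(t)\leq\phi(0)-\ell_1'\int_0^t g(s)ds+\sigma\ell_2' t$, to which Gr\"onwall applies directly (alternatively: $g=\phi-\psi$ with $\psi$ non-decreasing, so $g$ satisfies the differential inequality $g'\leq-\ell_1' g+\sigma\ell_2'$ in the distributional sense). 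This recovers exactly the paper's estimate $g(t)\leq(\obj(0)-\obj_*)e^{-\ell_1' t}+\sigma\ell_2'\frac{1-e^{-\ell_1' t}}{\ell_1'}$, and then restricting to $\Omega_{R,T}$ and dividing by $\mathbb{P}(\Omega_{R,T})$ proceeds as you described. The rest of your argument (the second Gr\"onwall bound to get $\mathbb{P}(\Omega_{R,T}^c)\leq C_1(T)/R^2$, then Markov plus the probability decomposition for the second claim) is correct and matches the paper.
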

\begin{proof}
  Let us define the stopping time
$\vartheta:=\inf\{t\geq 0: w_t\in B^c_{R}(0)\}$.  
By It\^o's formula
\begin{eqnarray*}
    \norm{w_{t\wedge \vartheta}}^2 = \|w_0\|^2 -2 \int_0^{t\wedge \vartheta} \langle w_s, \nabla \obj (w_s)\rangle ds + 2 d \sigma t\wedge \vartheta + 2\sqrt{2\sigma}\int_0^{t\wedge \vartheta} w_s  d B_s
\end{eqnarray*}
By recalling that $w_{t=0}=w_0=0$ and by taking the expectation on both sides we have
\begin{eqnarray*}
    \Ep{\norm{w_{t\wedge \vartheta}}^2} =  -2 \Ep{\int_0^{t\wedge \vartheta} \langle w_s, \nabla \obj (w_s)\rangle ds} + 2 d \sigma t,
\end{eqnarray*}
where we used that $\Ep{\int_0^{t\wedge \vartheta} w_s \cdot d B_s}=0$. By estimating $\Ep{\langle w_s, \nabla \obj (w_s)\rangle} \leq \Ep{\varepsilon \|w_s\|^2 + \frac{1}{\varepsilon} \|\nabla \obj (w_s)\|^2}$ for any $\varepsilon>0$ and using \eqref{eq:ass3-3} of Assumption \ref{assumption3} we obtain
\begin{equation}\label{eq:tt39}
    \begin{aligned}
        \Ep{\norm{w_{t\wedge \vartheta}}^2} &\leq 2\left( d \sigma + \frac{C_0}{\varepsilon} \right ) \Ep{t\wedge \vartheta} + 2 \left( \varepsilon + \frac{C_0}{\varepsilon} \right ) \Ep{\int_0^{t\wedge \vartheta} \norm{w_{s}}^2 ds}\\
        &\leq 2\left( d \sigma + \frac{C_0}{\varepsilon} \right ) t + 2 \left( \varepsilon + \frac{C_0}{\varepsilon} \right ) \Ep{\int_0^{t} \norm{w_{s\wedge \vartheta}}^2 ds}\\
        &=2\left( d \sigma + \frac{C_0}{\varepsilon} \right ) t + 2 \left( \varepsilon + \frac{C_0}{\varepsilon} \right ) \int_0^{t} \Ep{\norm{w_{s\wedge \vartheta}}^2} ds
    \end{aligned}
\end{equation}
and by Grönwall inequality
\begin{eqnarray}\label{eq:tt40}
    \Ep{\norm{w_{t\wedge \vartheta}}^2} &\leq&
2\left( d \sigma + \frac{C_0}{\varepsilon} \right ) t e^{2 \left( \varepsilon + \frac{C_0}{\varepsilon} \right ) t}\\
&\leq&2\left( d \sigma + \frac{C_0}{\varepsilon} \right ) T e^{2 \left( \varepsilon + \frac{C_0}{\varepsilon} \right ) T},
 \end{eqnarray}
 for all $0\leq t \leq T$,
 or
 \begin{eqnarray*}
    \Ep{\norm{w_{T\wedge \vartheta}}^2} \leq
  C(C_0,\sigma,d,T)=:C_1(T),
  \end{eqnarray*}
  where $C_1(T) \to \infty$ for $T\to \infty.$
  With this bound, we can provide the  estimate 
  \begin{equation}
   R^2 \mathbb P(\vartheta\leq T) \leq \Ep{\norm{w_{T\wedge \vartheta}}^2}\leq C_1(T),
\end{equation}
hence
\begin{equation}\label{eq:prob1}
\mathbb P(\vartheta<T) \leq \mathbb P(\vartheta\leq T)\leq \frac{C_1(T)}{R^2}\to 0,
\end{equation}
for $R\to \infty$.
We now recall the event
$\Omega_{R,T}:=\{\omega: \sup_{t \in [0,T]} \|w_t(\omega)\| \leq R\}$.
According to \eqref{eq:prob1} we have
\begin{equation}\label{eq:prob2}
\mathbb P(\Omega_{R,T}^c) =\mathbb P( T > \vartheta) \leq  \frac{C_1(T)}{R^2}.
\end{equation}
We introduce now
\begin{equation}
   I_{R}(t):= I_{R}(t,\omega):=\begin{cases}
        1 & \sup_{s\in [0,t]}\norm{w_s(\omega)}\leq R,\\
        0& \text{else},
    \end{cases}
\end{equation}
which is adapted to the natural filtration.
By using Assumption \ref{assumption3} and localizing the arguments of Proposition \ref{prop1} we obtain
\begin{equation}\label{eq:estim}
    \Ep{(\obj(w_t)-\obj_*)I_{R}(t)}\leq  {(\obj(0)-\obj_*)} e^{-\l_1' t} + \sigma\l_2' \frac{1- e^{-\l_1't}}{\l_1'}, \quad t \in [0,T].
\end{equation}
 By definition, we have {$\frac{\Ep{(\obj(w_T)-\obj_*)I_R(T)}}{\mathbb P(\Omega_{R,T})}= {\Ep{(\obj(w_t)-\obj_*)|\Omega_{R,T}}}$,
 that is
\begin{equation}\label{eq:estim2}
   \begin{aligned}
        \Ep{(\obj(w_t)-\obj_*)|\Omega_{R,T}}&\leq  \frac{{(\obj(0)-\obj_*)} e^{-\l_1' t} + \sigma\l_2' \frac{1- e^{-\l_1't}}{\l_1'}}{\mathbb P(\Omega_{R,T})}, \quad t \in [0,T],
   \end{aligned}
\end{equation}
}
We define now 
$$
K_\epsilon=\{\omega: \obj(w_T)- \obj_* \geq \epsilon\}.
$$
Then
\begin{eqnarray*}
\mathbb P(\{   \obj(w_T)- \obj_* \geq \epsilon)&=&  \mathbb P( K_\epsilon \cap \Omega_{R,T}) +  \mathbb P( K_\epsilon \cap \Omega_{R,T}^c)\\
&=& \mathbb P( K_\epsilon |\Omega_{R,T}) \mathbb P(\Omega_{R,T})+  \mathbb P( K_\epsilon \cap \Omega_{R,T}^c) \\
&\leq& \frac{1}{\epsilon} \left ((\obj(0)-\obj_*) e^{-\l_1' T} + \sigma\l_2' \frac{1- e^{-\l_1'T}}{\l_1'} \right ) +  \frac{C_1(T)}{R^2}\\
&\leq& \frac{2 \sigma \ell_2'/\ell_1'}{\epsilon} + \delta,
\end{eqnarray*}
where the first identity is due to Bayes theorem; the first inequality applies Markow inequality, \eqref{eq:estim2}, and \eqref{eq:prob1}; the last inequality holds for $T>0$ large enough and $R>0$ large enough. \revised{(We recall that, for the case of the square loss over neural networks, by Remark \ref{rem:arbR} we can choose $R>0$ as large as we want as long as the minimal width $m(R)$ of the layers is scaled accordingly.)}
\end{proof}

\section{Well-posedness and regularity}\label{sec:wpreg}
In this section we study the law of the process $\rho_t=\operatorname{Law}(w_t)$, for $w_t$ being the solution of the Langevin dynamics \eqref{eq:langevin}. We recall that $\rho_t$ fulfills the Fokker-Planck equation
\begin{equation}\label{eq:FP}
\partial_t \rho_t(w) = \operatorname{div}(\nabla \obj(w) \rho_t(w)) + \sigma \Delta \rho_t(w).
\end{equation}

We intend to revisit results of well-posedness and regularity of solutions, by introducing novel a priori estimates. These in depth arguments will allow us to characterize the large time behavior, as we do in details in Section \ref{sec:asympt}. We stress here that the results of these last two sections \emph{do not require PL or LSI conditions} and they are somehow independent of the concentration results obtained in previous sections.

Now, let $\revised{\varphi_t(w)}:=\frac{\rho_t(w)}{\pi(w)}$. From \eqref{eq:FP},  it is direct to verify that $\revised{\varphi}$ satisfies the following equation
\begin{equation}
    \partial_t\revised{\varphi}_t=\sigma\Delta\revised{\varphi}_t-\inner{\nabla \obj}{\nabla\revised{\varphi}_t}.
\end{equation}
\revised{Define the operator
\[
\mathfrak{L}\phi := \Delta \phi - \frac{\langle \nabla \phi, \nabla \obj \rangle}{\sigma}.
\]
Then the time-rescaled function $\phi_t:=\varphi_{t/\sigma}$ satisfies
\begin{equation}\label{eq:23}
\partial_t \phi_t = \mathfrak{L}\phi_t.
\end{equation}
here $\varphi_{t/\sigma}$ is obtained from $\varphi_t$ by rescaling time by a factor of $1/\sigma$.}
\subsection{Formal a priori estimates and asymptotics}
In this subsection, we provide a priori estimates and asymptotics of  the solution $\phi_t$ of equation \eqref{eq:23} by formal computations, which we will render rigorous in the next subsection. {For convenience of notation, in what follows we use the integration symbol $\int$ to denote integration over $\mathbb{R}^d$.}
\subsubsection{First Order Time Derivative.}
 Using formally integration by parts (which we justify in the proof of Theorem \ref{prop:7} below), we have
\begin{equation}
    \int\inner{\nabla\phi}{\nabla\psi}d\pi(w)=-\int\phi\mathfrak{L}\psi d\pi(w).
\end{equation}
Thus it is easy to derive the following identity
\begin{equation}
    \frac{d}{dt}\int\norm{\nabla\phi_t}^2d\pi(w)=-2\int(\mathfrak{L}\phi_t)^2d\pi(w)\leq 0.
\end{equation}
Next, we have 
\begin{equation}\label{eq:cntr}
    \frac{d}{dt}\int \phi^2_td\pi(w)=-2\int\norm{\nabla\phi_t}^2d\pi(w)\leq 0,
\end{equation}
thus $\int_0^t\int\norm{\nabla\phi_s}^2d\pi(w)ds\leq {1/2}\int\phi_0^2d\pi(w),\int\phi_t^2d\pi(w)\leq \int\phi_0^2d\pi(w)$, and 

\begin{equation}\label{eq:conv2const}
    \int\norm{\nabla\phi_T}^2d\pi(w)\leq \frac{1}{T}\int_0^T\int\norm{\nabla\phi_t}^2d\pi(w)dt\leq \frac{\int\phi_0^2d\pi(w)}{{2 T}}\to 0,\quad T\to\infty,
\end{equation}
here, we used $\frac{d}{dt}\int\norm{\nabla\phi_t}^2d\pi(w)=-2\int(\mathfrak{L}\phi_t)^2d\pi(w)\leq 0$.
% and, from \eqref{eq:cntr}, that 
% $$
% \int_0^\infty \int\norm{\nabla\phi_t}^2d\pi(w) dt \leq \frac{1}{2} \int\phi^2_0d\pi(w).
% $$
Notice now that \eqref{eq:conv2const} implies that the gradient of $\phi_t$ vanishes for $t\to \infty$, meaning that $\phi_t$ converges to a constant value on connected sets. The convergence to a constant does not depend on the integrability of $\pi$. 
Moreover, it comes with a quantitative rate of $\mathcal O\Big ( \frac{1}{T} \Big )$ as in 
\eqref{eq:conv2const}.
We will return on this aspect in Section \ref{sec:asympt} below to characterize the large-time behavior of $\rho_t$. 

% Next, we assume $\mathcal{X}^*$ is compact. In this case we can provide a different argument for describing the large time behavior. Namely, we can find ball a $B_R(x^*)$ such that $M_\epsilon\subset B_R(x^*)$, thus $\int_{B_R(x^*)} \phi_td\pi=\rho_t(B_R(x^*))\geq \rho_t(M_\epsilon)\geq 1-\epsilon$. Thus, by Poincar\'e inequality, we have
% \begin{equation}
%     \begin{aligned}
%         &\int_{B_R(x^*)}\norm{\phi_t-\frac{1}{|{B_R(x^*)}|}\int_{B_R(x^*)}\phi_t dx}^2dx\\
%         &\leq C(R)\int_{B_R(x^*)}\norm{\nabla\phi_t}^2dx\\
%         &\leq C(R,\min_{x\in{B_R(x^*)}}\pi(w))\int_{B_R(x^*)}\norm{\nabla\phi_t}^2d\pi(w)\\
%         &\leq C(R,\min_{x\in{B_R(x^*)}}\pi(w))\int\norm{\nabla\phi_t}^2d\pi(w)\\
%         &\leq \frac{C(R,\min_{x\in{B_R(x^*)}}\pi(w))\int \phi_0^2d\pi(w)}{2t}\to 0,
%     \end{aligned}
% \end{equation}
% since $\int_{B_R(x^*)}\phi_t dx\geq 1-\epsilon$, we have $\frac{1}{|{B_R(x^*)}|}\int_{B_R(x^*)}\phi_t dx\geq \frac{1-\epsilon}{|{B_R(x^*)}|}>0$, thus we can show that 
% \begin{equation}
%     \frac{\rho_t(w)}{\rho_t(y)}\to\frac{\pi(w)}{\pi(y)},\quad\text{as }t\to\infty,
% \end{equation}

\subsubsection{Novel higher order a priori estimates}

The following a priori estimates are interesting as they seem not to appear in the broad literature related to the Fokker-Planck equation. They are  useful to obtain in alternative manner well-posedness and regularity of the solution, as we establish in Theorem \ref{prop:7} below. For now, let us collect these estimates for later use as follows.

\paragraph{Second order time derivative.}
We formally compute the time derivative of $\int (\mathfrak{L}\phi_t)^2d\pi(w)$.
We have
\begin{equation}
   \begin{aligned}
        \frac{d}{dt}\int (\mathfrak{L}\phi_t)^2d\pi(w)&=2\int\mathfrak{L}\phi_t\mathfrak{L}(\partial_t\phi_t)d\pi(w)\\
        &=2\int\mathfrak{L}\phi_t\mathfrak{L}(\mathfrak{L}\phi_t)d\pi(w)\\
        &=-2\int \norm{\nabla\mathfrak{L}\phi_t}^2d\pi(w)\leq 0,
   \end{aligned}
\end{equation}
which means $\int (\mathfrak{L}\phi_t)^2d\pi(w)$ is monotone non-increasing.
Thus we have
\begin{equation}
   \int(\mathfrak{L}\phi_{2T})^2d\pi(w) \leq\frac{1}{T}\int_T^{2T}\int(\mathfrak{L}\phi_t)^2d\pi(w)dt\leq \frac{\int\norm{\nabla\phi_T}^2d\pi(w)}{2T}\leq \frac{\int\phi_0^2d\pi(w)}{2^2T^2}.
\end{equation}

\paragraph{Third order time derivative.}

Now we calculate the time derivative of $\int \norm{\nabla\mathfrak{L}\phi_t}^2d\pi(w)$.
We have
\begin{equation}
   \begin{aligned}
        \frac{d}{dt}\int \norm{\nabla\mathfrak{L}\phi_t}^2d\pi(w)&=2\int\inner{\nabla\mathfrak{L}\phi_t}{\nabla\mathfrak{L}(\partial_t\phi_t)}d\pi(w)\\
        &=-2\int\mathfrak{L}^2\phi_t\mathfrak{L}(\partial_t\phi_t)d\pi(w)\\
        &=-2\int (\mathfrak{L}^2\phi_t)^2\leq 0,
   \end{aligned}
\end{equation}
which means $\int \norm{\nabla\mathfrak{L}\phi_t}^2d\pi(w)$ is monotone non-increasing.
Thus we have
\begin{equation}
   \int\norm{\nabla\mathfrak{L}\phi_{3T}}^2d\pi(w) \leq\frac{1}{T}\int_{2T}^{3T}\int\norm{\nabla\mathfrak{L}\phi_t}^2d\pi(w)dt\leq \frac{\int(\mathfrak{L}\phi_{2T})^2d\pi(w)}{2T}\leq \frac{\int\phi_0^2d\pi(w)}{2^3T^3}.
\end{equation}

\paragraph{Higher order time derivative.}

Finally, we define the operator 
\begin{equation}
    \mathfrak{F}_k:=\begin{cases}
        \mathfrak{L}^{\frac{k}{2}}& \text{$k$ is even}\\
        \nabla\mathfrak{L}^{\frac{k-1}{2}}& \text{$k$ is odd}.
    \end{cases}
\end{equation}

Then, by induction, we can derive that
\begin{equation}
    \int\norm{\mathfrak{F}_k\phi_{kT}}^2d\pi(w)\leq \frac{\int\phi_0^2d\pi(w)}{2^kT^k},
\end{equation}
or equivalently
\begin{equation}
    \int\norm{\mathfrak{F}_k\phi_{T}}^2d\pi(w)\leq \frac{k^k\int\phi_0^2d\pi(w)}{2^kT^k}=\Big(\frac{k}{2T}\Big)^k\int\phi_0^2d\pi(w),\quad\forall T> 0.
\end{equation}
\begin{remark} {\bf Regularity:}
    Given the fact that $\frac{d^k}{dt^k}\phi_t=\mathfrak{L}^k\phi_t$,
    \begin{equation}
        \int_\tau^t\int_{\mathbb{R}^d}\norm{\mathfrak{L}^k\phi_t}^2d\pi(w)dt<\infty, \quad t\geq \tau>0,
    \end{equation}
    and Sobolev embedding, we know that $\phi_t$ is automatically smooth both in the space and time variable, provided that $\obj$ is smooth and $\int\phi_0^2d\pi(w)<\infty$. 
\end{remark}
\subsection{Rigorous results of well-posedness and regularity}
In the last section, we performed some novel (formal) estimations, by using integration by parts. In this section we show that these arguments are actually legal, by verifying rigorously that $\phi_t=\frac{\rho_{\revised{t/\sigma}}}{\pi}$, where $\rho_t=\operatorname{Law}(\revised{w_t})$ does satisfy the estimates under proper (mild) assumptions on $\obj$.

\begin{theorem}\label{prop:7}
    Let $\obj$ be a smooth function and $\phi_0 \in L^2(\mathbb{R}^d,\pi)$, $\phi_0>0$, then the following equation
    \begin{equation}
            \partial_t\phi_t=\revised{\mathfrak{L}\phi_t=\Delta\phi_t-\frac{\inner{\nabla \obj}{\nabla\phi_t}}{\sigma}}
    \end{equation}
    with initial data $\phi_0$ has a unique non-negative smooth solution satisfying the following estimate
\begin{equation}
    \int\norm{\mathfrak{F}_k\phi_{t}}^2d\pi(w)\leq \frac{k^k\int\phi_0^2d\pi(w)}{2^kt^k}=\Big(\frac{k}{2t}\Big)^k\int\phi_0^2d\pi(w),\quad\forall t> 0,
\end{equation}
and $2\int_0^t\int\norm{\nabla\phi_t}^2d\pi(w)dt\leq \int \phi_0^2d\pi(w),\int\phi_t^2d\pi(w)\leq \int\phi_0^2d\pi(w)$.
\end{theorem}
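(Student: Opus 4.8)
The plan is to establish existence and regularity via a parabolic approximation scheme, and then derive the a priori estimates rigorously on the approximants so that they pass to the limit. First I would fix a large radius $n$ and solve the initial-boundary value problem for $\partial_t\phi^{(n)}=\mathfrak L\phi^{(n)}$ on the ball $B_n(0)$ with, say, Neumann boundary conditions and initial data $\phi_0$ (mollified if necessary, and truncated so that $0\leq\phi_0^{(n)}\leq C_n$). Since $\obj$ is smooth, on each fixed ball the operator $\mathfrak L=\sigma\Delta-\inner{\nabla\obj}{\nabla\cdot}$ is uniformly elliptic with smooth coefficients, so classical linear parabolic theory (e.g.\ Ladyzhenskaya–Solonnikov–Ural'tseva) gives a unique smooth solution $\phi^{(n)}_t$, and the maximum principle gives nonnegativity. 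The weighted measure $d\pi=e^{-\obj/\sigma}dw$ is the natural one: on $B_n$ the identity $\int_{B_n}\inner{\nabla\phi}{\nabla\psi}\,d\pi=-\int_{B_n}\phi\,\mathfrak L\psi\,d\pi + \text{(boundary term)}$ holds, and with Neumann conditions the boundary term vanishes, so the formal computations of the previous subsection become legitimate on $B_n$.

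Next I would run exactly the telescoping energy estimates from the formal section on $\phi^{(n)}$: monotonicity of $\int(\mathfrak F_k\phi^{(n)}_t)^2d\pi$ in $t$, the bounds $\int_0^t\int_{B_n}\norm{\nabla\phi^{(n)}_s}^2d\pi\,ds\leq\int_{B_n}\phi_0^2d\pi$ and $\int_{B_n}(\phi^{(n)}_t)^2d\pi\leq\int_{B_n}\phi_0^2d\pi$, and inductively the key estimate
\begin{equation*}
\int_{B_n}\norm{\mathfrak F_k\phi^{(n)}_t}^2d\pi\leq\Big(\frac{k}{2t}\Big)^k\int_{B_n}\phi_0^2d\pi,\quad t>0,
\end{equation*}
with all constants independent of $n$. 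These uniform bounds, together with $\frac{d^j}{dt^j}\phi^{(n)}_t=\mathfrak L^j\phi^{(n)}_t$ and local parabolic regularity (Schauder / $L^2$-based interior estimates on compact subsets, using the uniform ellipticity of $\mathfrak L$ away from the boundary), give uniform-on-compacts bounds on $\phi^{(n)}$ and all its space-time derivatives for $t$ bounded away from $0$. Hence a diagonal/Arzelà–Ascoli argument extracts a subsequence converging in $C^\infty_{\mathrm{loc}}((0,\infty)\times\mathbb R^d)$ to a smooth function $\phi_t$ solving $\partial_t\phi_t=\mathfrak L\phi_t$, and by Fatou's lemma $\phi_t$ inherits all the weighted $L^2$ estimates over $\mathbb R^d$ (and nonnegativity). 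Continuity down to $t=0$ in $L^2(\pi)$ follows from the energy identity $\frac{d}{dt}\int\phi_t^2d\pi=-2\int\norm{\nabla\phi_t}^2d\pi$ together with weak lower semicontinuity.

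For uniqueness I would argue directly on $L^2(\pi)$: if $\phi_t,\tilde\phi_t$ are two solutions with the same initial data and finite energy, then $u_t=\phi_t-\tilde\phi_t$ satisfies $\frac{d}{dt}\int u_t^2d\pi=-2\int\norm{\nabla u_t}^2d\pi\leq 0$ with $\int u_0^2d\pi=0$, so $u_t\equiv 0$; the only subtlety is justifying the integration by parts for solutions known a priori only to have finite energy, which one handles by a cutoff $\chi_R$ and checking the commutator terms $\int \nabla\chi_R\cdot(\cdots)d\pi\to 0$ using the finite-energy bound. Finally, the identification $\phi_t=\rho_t/\pi$ with $\rho_t=\operatorname{Law}(w_t)$ follows because $\rho_t$ solves the Fokker–Planck equation \eqref{eq:FP} (Assumption \ref{assumption2} or the cited results guarantee a well-defined law solving it in the appropriate weak sense), hence $\rho_t/\pi$ solves \eqref{eq:23}; one checks $\rho_0/\pi=\phi_0\in L^2(\pi)$ and invokes the uniqueness just proved. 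I expect the main obstacle to be precisely the passage from the clean formal integration-by-parts identities to rigorous statements: controlling the boundary/cutoff terms uniformly in $n$ and $R$, and ensuring the higher-order weighted estimates survive the limit — this is where the smoothness of $\obj$ and the structure of the weight $\pi$ (which makes $\mathfrak L$ self-adjoint and nonpositive on $L^2(\pi)$) are essential, whereas no growth, PL, or LSI condition on $\obj$ is needed.
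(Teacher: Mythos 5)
Your proposal is correct in spirit and takes a genuinely different route from the paper, so let me compare the two and then flag one point you glossed over.

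\textbf{Comparison with the paper's proof.} The paper keeps the domain $\mathbb{R}^d$ but \emph{modifies the potential}: it replaces $\obj$ by $\obj_i=S_i\obj+(1-S_i)\norm{\cdot}$, which has bounded derivatives and bounded $\pi_i=e^{-\obj_i/\sigma}$, invokes a separate well-posedness result for this tame problem, and then \emph{justifies the integration by parts on $\mathbb{R}^d$ directly} by showing that the boundary term at radius $R$ is controlled by $\sum_j\int_{B_{R+1}\setminus B_R}\norm{\phi_{i,t}^{(j)}}^2\,dw\to 0$ via the trace theorem and the fact that $\phi_{i,t}\in W^{1,\infty}(0,T;H^m)$ for all $m$. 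You instead \emph{truncate the domain} to $B_n$ with Neumann boundary conditions and keep the original $\obj$. What your approach buys: on the ball the operator is automatically uniformly elliptic with smooth bounded coefficients, classical LSU theory applies, the maximum principle gives nonnegativity, and there are no boundary terms at all in the weighted integration by parts, so you avoid the delicate Sobolev-decay-at-infinity argument entirely. What the paper's approach buys: all the $\phi_{i,t}$ live on the same unbounded domain, so no compatibility issues at the space-time corner $\{0\}\times\partial B_n$ arise, and the limiting step only has to relax the potential, not the geometry. Both routes then converge: interior parabolic regularity plus a diagonal argument give a $C^\infty_{\mathrm{loc}}$ limit, Fatou (or monotone convergence over exhausting compacts) transmits the weighted estimates, a density argument in $L^2(\pi)$ handles general $\phi_0$, and uniqueness is the same one-line energy argument in both.

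\textbf{One gap to close.} You write that ``with Neumann conditions the boundary term vanishes, so the formal computations of the previous subsection become legitimate on $B_n$.'' This is true for the first-order identity $\int_{B_n}\phi\,\mathfrak L\psi\,d\pi=-\int_{B_n}\inner{\nabla\phi}{\nabla\psi}\,d\pi$, where the boundary term contains $\partial_\nu\psi$, but the higher-order telescoping estimates repeatedly apply
\begin{equation*}
\int_{B_n}\mathfrak L^m\phi\,\mathfrak L^n\phi\,d\pi
=\int_{\partial B_n}\bigl(\partial_\nu\mathfrak L^{m-1}\phi\bigr)\,\mathfrak L^n\phi\,\pi\,dS
-\int_{B_n}\inner{\nabla\mathfrak L^{m-1}\phi}{\nabla\mathfrak L^n\phi}\,d\pi,
\end{equation*}
and the boundary term here requires $\partial_\nu\mathfrak L^{m-1}\phi^{(n)}_t=0$ on $\partial B_n$ for every $m$, not merely $\partial_\nu\phi^{(n)}_t=0$. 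You need to argue explicitly that the Neumann condition propagates: since $\phi^{(n)}$ is smooth up to $\partial B_n$ for $t>0$ and $\partial_\nu\phi^{(n)}_t=0$ holds for all $t>0$, differentiating in $t$ and using $\partial_t\phi^{(n)}=\mathfrak L\phi^{(n)}$ up to the boundary gives $\partial_\nu\mathfrak L\phi^{(n)}_t=0$, and induction gives $\partial_\nu\mathfrak L^k\phi^{(n)}_t=0$ for all $k$. This is an easy but genuine step, and without stating it the chain of weighted energy identities is not actually justified. Relatedly, if you want the higher-order quantities $\int_{B_n}\norm{\mathfrak F_k\phi^{(n)}_t}^2d\pi$ to be finite and differentiable in $t$ down to $t$ close to $0$, you either need compatibility conditions on the mollified/truncated $\phi_0^{(n)}$ (so the solution is smooth at the corner) or you should start the telescoping from some $t_0>0$ and note the final bound $(k/(2t))^k\int\phi_0^2\,d\pi$ is only claimed for $t>0$ anyway, so parabolic smoothing handles it; worth a sentence either way.
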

\begin{remark}\label{rmk:14}
   \revised{ The above theorem is equivalent to 
    \begin{equation}
    \int\norm{\mathfrak{F}_k\varphi_{t}}^2d\pi(w)\leq \frac{k^k\int\varphi_0^2d\pi(w)}{(2\sigma)^kt^k}=\Big(\frac{k}{2\sigma t}\Big)^k\int\varphi_0^2d\pi(w),\quad\forall t> 0,
\end{equation}
and $2\sigma\int_0^t\int\norm{\nabla\varphi_t}^2d\pi(w)dt\leq \int \varphi_0^2d\pi(w),\int\varphi_t^2d\pi(w)\leq \int\varphi_0^2d\pi(w)$, here $\varphi_t=\phi_{t\sigma}$ and $\varphi_0=\phi_0$.}
\end{remark}

\begin{proof} \revised{In the proof, let $\bold{j}=(j_1,\cdots,j_d)$ be a multi-index with non-negative integer components, and denote its order by $|\bold{j}|=\sum_{i=1}^d j_i$. We write $\phi^{(\bold{j})}$ for the partial derivative $\frac{\partial^{|\bold{j}|}\phi}{\partial w_1^{j_1}\cdots \partial w_d^{j_d}}$. Moreover, $\sum_{|\bold{j}|=k} \|\phi^{(\bold{j})}\|$ denotes the sum of the norms of all derivatives of $\phi$ of total order $k$.}

   \paragraph{I.} Let $S_i:\mathbb{R}^d\to [0,1]$ be a smooth function  that equals $1$ within $B_{2^i}(0)$ and equals $0$ outside $B_{2^i+1}(0)$. We define $\obj_i(w)=S_i(w)\obj(w)+(1-S_i(w))\norm{w}$, then it is direct to verify that $\norm{\obj_i^{(k)}}$ is bounded on $\mathbb{R}^d$ for any $k\geq 1$ and $\pi_i(w)=e^{-\frac{\obj_i}{\sigma}}$ is bounded on $\mathbb{R}^d$. With this $\obj_i$, by \cite[Theorem 2.5]{fornasier2025regularity}, the following Cauchy problem
    \begin{equation}\label{eq:73}
        \begin{cases}
            &\partial_t\phi_{i,t}=\sigma\Delta\phi_{i,t}-\inner{\nabla \obj_i}{\nabla\phi_{i,t}}=:\mathfrak{L}_i\phi_{i,t},\\
            &\phi_{i,0}(w)=\psi(w)\geq 0,\quad \psi\in C_c^\infty(\mathbb{R}^d)
        \end{cases}
    \end{equation}
    has a unique non-negative solution in $C^\infty([0,T]\times\mathbb{R}^d)\cap W^{1,\infty}(0,T;H^k(\mathbb{R}^d))$, for any $k\geq 0$, the non-negativeness of the solution is by the comparison principle~(since now $\nabla \obj_i$ has bounded derivatives, see \cite[Section 3.1]{vazquez2007porous}). By the properties of $\obj_i$, we have
    \begin{equation}\label{eq:new74}
        \begin{aligned}
            &\norm{\mathfrak{L}_i^k\phi_{i,t}}\leq C_i\sum_{|\bold{j}|=0}^{2k}\norm{\phi_{i,t}^{(\bold{j})}},\\
            &\norm{\nabla\mathfrak{L}_i^k\phi_{i,t}}\leq C_i\sum_{|\bold{j}|=0}^{2k+1}\norm{\phi_{i,t}^{(\bold{j})}},\\
            &\norm{\mathfrak{L}_i^k\partial_t\phi_{i,t}}\leq C_i\sum_{|\bold{j}|=0}^{2k}\norm{\partial_t\phi_{i,t}^{(\bold{j})}},\\
            &\norm{\nabla\mathfrak{L}_i^k\partial_t\phi_{i,t}}\leq C_i\sum_{|\bold{j}|=0}^{2k+1}\norm{\partial_t\phi_{i,t}^{(\bold{j})}},
        \end{aligned}   \end{equation}
thus, we have $\nabla\mathfrak{L}_i^k\phi_{i,t},\mathfrak{L}_i^k\phi_{i,t}\in W^{1,\infty}(0,T;H^m(\mathbb{R}^d))$ for any $k,m>0$.

Now, for any integers $m\geq 1,n\geq 0$,  by integration by parts, we have
\begin{equation}
    \begin{aligned}
        \int_{B_R(0)}\mathfrak{L}_i^m\phi_{i,t}\mathfrak{L}_i^n\phi_{i,t}d\pi_i(w)&=\int_{\partial B_R(0)}\Big(\frac{\partial}{\partial \nu}\mathfrak{L}_i^{m-1}\phi_{i,t}(w)\Big)\mathfrak{L}_i^n\phi_{i,t}(w)\pi_i(w)dS\\
        &\quad -\int_{B_R(0)}\inner{\nabla \mathfrak{L}_i^{m-1}\phi_{i,t}}{\nabla\mathfrak{L}_i^n\phi_{i,t}}d\pi_i(w).
    \end{aligned}
\end{equation}
For the boundary term, we have
\begin{equation}
            \begin{aligned}
                &\Big|\int_{\partial B_R(0)}\Big(\frac{\partial}{\partial \nu}\mathfrak{L}_i^{m-1}\phi_{i,t}\Big)\mathfrak{L}_i^{n}\phi_{i,t}(w)\pi_i(w)dS\Big|\\
                &\leq C_i\Big(\int_{\partial B_R(0)}\norm{\frac{\partial}{\partial \nu}\mathfrak{L}_i^{m-1}\phi_{i,t}}^2dS+\int_{\partial B_R(0)}\norm{\mathfrak{L}_i^{n}\phi_{i,t}}^2dS\Big)\\
                &\leq C_i\Big(\int_{\partial B_R(0)\cup \partial B_{R+1}(0)}\norm{\frac{\partial}{\partial \nu}\mathfrak{L}_i^{m-1}\phi_{i,t}}^2dS+\int_{\partial B_R(0)\cup\partial B_{R+1}(0)}\norm{\mathfrak{L}_i^{n}\phi_{i,t}}^2dS\Big)\\
                &\leq C_i\sum_{|\bold{j}|=0}^{\max\{2m-1,2n\}}\int_{\partial B_R(0)}\norm{\phi_{i,t}^{(\bold{j})}}^2dS\\
                &\quad +C_i\sum_{|\bold{j}|=0}^{\max\{2m-1,2n\}}\int_{\partial B_{R+1}(0)}\norm{\phi_{i,t}^{(\bold{j})}}^2dS\\
                &\leq C_i\sum_{|\bold{j}|=0}^{\max\{2m,2n+1\}}\int_{B_{R+1}(0)\setminus B_R(0)}\norm{\phi_{i,t}^{(\bold{j})}}^2dw\\
                &\to 0,\quad\text{ as }R\to\infty,
            \end{aligned}
        \end{equation}
     in the above, the first inequality is due to Cauchy-Schwartz inequality, the third one is due to the estimates \eqref{eq:new74}, and the last one is due to the trace theorem. We note that the constant in the trace theorem here is independent of $R$. This follows by decomposing $B_{R+1}(0) \setminus B_R(0)$ into a disjoint union of smaller regions, applying the trace theorem on each, and summing the results (see the proof of \cite[Lemma 2.13]{fornasier2025regularity} for details).
    
    Thus, let $R\to\infty$, we have
        \begin{equation}
\int\mathfrak{L}_i^m\phi_{i,t}\mathfrak{L}_i^n\phi_{i,t}d\pi_i(w)=-\int\inner{\nabla \mathfrak{L}_i^{m-1}\phi_{i,t}}{\nabla\mathfrak{L}_i^n\phi_{i,t}}d\pi_i(w),
        \end{equation}
for any integers $m\geq 1,n\geq 0$, which means that the use of integration by parts in the last section is legal for $\phi_{i,t}$. Thus we proved that the following estimates hold
\begin{equation}
    \int\norm{\mathfrak{F}_{k,i}\phi_{i,t}}^2d\pi_i(w)\leq \frac{k^k\int\psi^2d\pi_i(w)}{2^kt^k}=\Big(\frac{k}{2t}\Big)^k\int\psi^2d\pi_i(w),\quad\forall t> 0,
\end{equation}
and $2\int_0^t\int\norm{\nabla\phi_{i,s}}d\pi_i(w)ds\leq \int \psi^2d\pi_i(w),\int (\phi_{i,t})^2d\pi_i(w)\leq \int \psi^2d\pi_i(w)$. Recall that $\psi$ is the initial non-negative datum in \eqref{eq:73}.

\paragraph{II.}By interior regularity theory of elliptic equation~(see Gårding's inequality \cite[Theorem 3.54]{aubin2012nonlinear}), if $\mathfrak{L}^k_ig=f$ on $U$ weakly, then for $V\subset\subset U$, we have
\begin{equation}
    \norm{g}_{H^{2k}(V)}\leq C_{i,k}(V,U)\Big(\norm{f}_{L^2(U)}+\norm{g}_{L^2(U)}\Big)=C_{i,k}(V,U)\Big(\norm{\mathfrak{L}_i^k g}_{L^2(U)}+\norm{g}_{L^2(U)}\Big),
\end{equation}
thus, for $h\geq i$, we have
\begin{equation}
    \sum_{|\bold{j}|=0}^{2m}\int_{B_i(0)}\norm{\phi_{h,t}^{(\bold{j})}}^2dw\leq C_{m,i}\sum_{j=0}^{2m}\int_{B_i(0)}\norm{\mathfrak{F}_{j,h}\phi_{h,t}}^2d\pi_i(w)\leq C_{m,i,t,\psi}<\infty,
\end{equation}
here we denote $B_i(0):=B_{2^i}(0)$ for simplicity.
By Sobolev embedding, we have $\norm{\phi_{h,t}}_{L^\infty(B_i(0),[t,\infty))}\leq C_{m,i,t,\psi}<\infty$. Thus by Shauder's interior estimate of parabolic equation~(see for example \cite[Theorem 18]{sun2025well}), we have
\begin{equation}\label{eq:81}
    \norm{\phi^{(k)}_{h,t}}_{2+\delta,1+\delta/2; B_{i-1}(0)\times [t,t^{-1}]}\leq C_{m,k,i,t,\psi}<\infty,
\end{equation}
for some $\delta\in (0,1)$ and any $k\geq 0$. 

\revised{

Now, by a diagonal selection procedure, we can find a smooth non-negative limit function satisfying the equation 
\begin{equation}\label{eq:82}
    \begin{cases}
        &\partial_t\phi_t=\sigma\Delta\phi_t-\inner{\nabla \obj}{\nabla\phi_t},\\
        &\phi_0(w)=\psi(w),
    \end{cases}
\end{equation}
weakly. As  this function is  smooth, it also satisfies the equation classically, and the following estimates hold by Fatou's lemma:
\begin{equation}
    \int\norm{\mathfrak{F}_{k}\phi_{t}}^2d\pi(w)\leq \Big(\frac{k}{2t}\Big)^k\int\psi^2d\pi(w),\quad\forall t> 0,
\end{equation}
and $2\int_0^t\int\norm{\nabla\phi_{s}}d\pi(w)ds\leq \int \psi^2d\pi(w),\int (\phi_{t})^2d\pi(w)\leq \int \psi^2d\pi(w)$. 

For reader's convenience, we add more detail for this diagonal selection procedure~(we fix $k=10$, which is large enough for us to derive a limit function).  We will use estimate \eqref{eq:81}, in which the upper bound does not depend on $h$, and the fact that the unit ball in Banach space $C^{k+2+\delta,1+\delta/2}(B_R(0)\times [t_0,t_1])$ is compact in $C^{k+2+\delta-\epsilon,1+\delta/2-\epsilon}(B_R(0)\times [t_0,t_1])$ for any small $\epsilon>0$. 
Now for $B_1(0)\times [2^{-1},2^1]$, we can extract a diverging subsequence $(h_{1j})_{j=1}^\infty$ of positive integers, such that $\phi_{h_{1j},t}\to \phi_t$ in $C^{12+\delta-\epsilon,1+\delta/2-\epsilon}(B_1\times [2^{-1},2])$, here we denote $\phi_t$ as the limit function in 
$C^{12+\delta,1+\delta/2}(B_1\times [2^{-1},2])$, and on $B_1(0)\times [2^{-1},2^1]$, we have $\partial_t\phi_t=\sigma\Delta\phi_t-\inner{\nabla \obj}{\nabla\phi_t}$ point-wise, since $\partial_t\phi_{h_{1j},t}=\sigma\Delta\phi_{h_{1j},t}-\inner{\nabla \obj}{\nabla\phi_{h_{1j},t}}$ point-wise, for $j$ large enough.
Now, on $B_{2}(0)\times [2^{-2},2^2]$, we can extract a subsequence $(h_{2j})_{j=1}^\infty$ from $(h_{1j})_{j=1}^\infty$, such that the limit function~(which is still denoted as $\phi_t$ and equals the previous limit function on $B_1\times [2^{-1},2]$)  $\phi_t\in C^{12+\delta,1+\delta/2}(B_2\times [2^{-2},2^2])$ and 
satisfies point-wise the equation on $B_2\times [2^{-2},2^2]$. We can keep this selection 
procedure by induction, that is extracting a  subsequence $(h_{ij})_{j=1}^\infty$ from $(h_{(i-1)j})_{j=1}^\infty$, and the limit function again denoted as $\phi_t\in C^{12+\delta,1+\delta/2}(B_i\times [2^{-i},2^i])$  which satisfies the equation point-wise on $B_i\times [2^{-i},2^i]$.
Now, we let $h_i:=h_{ii}$, then $\phi_{h_i,t}$ will converge to $\phi_t$ in $C_{loc}^{12+\delta-\epsilon,1+\delta/2-\epsilon}(\mathbb{R}^d\times (0,\infty))$, and this $\phi_t$ satisfies the equation on $\mathbb{R}^d\times (0,\infty)$ point-wise.
Next, we verify $\phi_t$ is a weak solution to the Cauchy problem \eqref{eq:82}~(hence $\phi_t$ is also a classical solution to the Cauchy problem \eqref{eq:82}). Let $f\in C_c^\infty(\mathbb{R}^d)$ be a test function, whose support we denote as $\Omega$. Then for each $i$, we have
\begin{equation}\label{eq:84}
    \begin{aligned}
        \int_\Omega \phi_{h_{i},T}(w)f(w)d\pi_{h_{i}}(w)-\int_\Omega \psi(w)f(w)d\pi_{h_{i}}(w)=-\int_{0}^T\int_\Omega\inner{\nabla\phi_{h_{i},t}(w)}{\nabla f(w)}d\pi_{h_{i}}(w)dt,
    \end{aligned}
\end{equation}
for all $T>0$, since $\phi_{h_{i},t}$ is a classical solution to the Cauchy problem \eqref{eq:73}. Now, since $\phi_{h_{i},T}\to\phi_T$ in $C_{loc}^{12+\delta-\epsilon,1+\delta/2-\epsilon}(\mathbb{R}^d\times (0,\infty))$ and $\pi_{h_i}\to \pi$ in $C^k_{loc}(\mathbb{R}^d)$ for any $k\geq 1$, we have that the left hand side of \eqref{eq:84} converges to 
\begin{eqnarray}
    \int_\Omega \phi_{T}(w)f(w)d\pi(w)-\int_\Omega \psi(w)f(w)d\pi(w).
\end{eqnarray}
For the right hand side, we have $\norm{\nabla\phi_{h_{i},t}}_{ L^{2}(\Omega\times [0,T])}\leq C(\psi,\Omega,\mathcal{L})<\infty$ for $i$ large enough (which is a direct consequence of estimate \eqref{eq:conv2const}). Thus in the space $L^{2}(\Omega\times [0,T])$ there is a weak limit of $\nabla\phi_{h_{i},t}$, which we denote as $\tilde{\Phi}\in L^{2}(\Omega\times [0,T])$, that is 
\begin{eqnarray}  \lim_{i\to\infty}\int_{0}^T\int_{\Omega}\inner{\nabla\phi_{h_{i},t}(w)}{\nabla f(w)}d\pi_{h_{i}}(w)dt=\int_{0}^T\int_{\Omega}\inner{\tilde{\Phi}(w)}{\nabla f(w)}d\pi(w)dt.
\end{eqnarray}
On the other hand, we have $\nabla\phi_{h_{i},t}\to\nabla\phi_t$ in $C_{loc}^{11+\delta-\epsilon,1+\delta/2-\epsilon}(\mathbb{R}^d\times (0,\infty))$, thus we have $\tilde{\Phi}=\nabla\phi_t$, and   $\phi_t$ satisfies Cauchy problem \eqref{eq:82} weakly. 

In the above derivation, we only fixed $k=10$ and derived $\phi_t\in C_{loc}^{12+\delta,1+\delta/2}(\mathbb{R}^d\times (0,\infty))$, but for general $k$ we can use 
the estimate \eqref{eq:81} to do the same derivation to get this $\phi_t\in C_{loc}^{k+2+\delta,1+\delta/2}(\mathbb{R}^d\times (0,\infty))$. For example, to get the result for $k=20$, in the first step of the above diagonal selection procedure, we extract a new subsequence $(h_{1j})_{j=1}^\infty$ from $(h_i)_{i=1}^\infty$~(we defined this sequence above for $k=10$) instead of the original positive integers, which can guarantee the limit function equals $\phi_t$ defined for $k=10$; the rest of the induction steps are the same as the case for $k=10$; then we get $\phi_t\in C_{loc}^{20+2+\delta,1+\delta/2}(\mathbb{R}^d\times (0,\infty))$. Since $k$ can be any integer, we get $\phi_t\in C_{loc}^{k+2+\delta,1+\delta/2}(\mathbb{R}^d\times (0,\infty))$ for any $k$ integer and for each $k$, there is a subsequence $(\tilde{h}_{kj})_{j=1}^\infty$, such that $\phi_{\tilde{h}_{kj},t}\to \phi_t$ in $C_{loc}^{k+2+\delta-\epsilon,1+\delta/2-\epsilon}(\mathbb{R}^d\times (0,\infty))$.

}

\paragraph{III.}Since the right hand side of the above estimates only depends 
on the $L^2(\mathbb{R}^d,\pi)$ norm of the initial data $\psi$, 
so for any initial data $\phi_0\in L^2(\mathbb{R}^d,\pi)$, 
we can use a sequence of $C_c^\infty$ functions $\psi_k$ to 
approximate $\phi_0$ in space $L^2(\mathbb{R}^d,\pi)$. Then following a similar approximation procedure as in step \textbf{II}, we can find a limit non-negative function that solves the Cauchy problem with initial data $\phi_0$  satisfying the estimate
\begin{equation}
    \int\norm{\mathfrak{F}_{k}\phi_{t}}^2d\pi(w)\leq \frac{k^k\int\phi_0^2d\pi(w)}{2^kt^k}=\Big(\frac{k}{2t}\Big)^k\int\phi_0^2d\pi(w),\quad\forall t> 0,
\end{equation}
and $\int_0^t\int\norm{\nabla\phi_{s}}d\pi(w)ds\leq \int \phi_0^2d\pi(w),\int (\phi_{t})^2d\pi(w)\leq \int \phi_0^2d\pi(w)$.

\paragraph{IV.} As to the uniqueness of such kind of solution, we can prove in the following:
let $\tilde{\varphi}_t=\phi_{1,t}-\phi_{2,t}$, here $\phi_{i,t}$ is the solution to the Cauchy problem with the same initial data $\phi_0$ and satisfies the above estimates. Then we know that $\tilde{\varphi}$ is the solution to the Cauchy problem with initial data $0$, and $\tilde{\varphi}_t\in L^\infty(0,T;L^2(\mathbb{R}^d,\pi))\cap L^2(0,T;H^1(\mathbb{R}^d,\pi))$.

We have
\begin{equation}
    \begin{aligned}
        \int(\tilde{\varphi}_t)^2d\pi(w)-\int(\tilde{\varphi}_\tau)^2d\pi(w)&=-2\int_\tau^t\int\inner{\nabla\tilde{\varphi}_s}{\nabla\tilde{\varphi}_s}d\pi(w)ds\\
      &\leq 0,
    \end{aligned}
\end{equation}
let $\tau=0$, we then have
\begin{equation}
    \int(\tilde{\varphi}_t)^2d\pi(w)\leq 0,
\end{equation}
which is only possible for $\tilde{\varphi}_t\equiv 0$. Thus the solution is unique.
\end{proof}
\\

In \revised{Theorem} \ref{prop:7}, uniqueness is established in the space $ L^\infty(0,T;L^2(\mathbb{R}^d,\pi))\cap L^2(0,T;H^1(\mathbb{R}^d,\pi))$. However, it is unclear whether $\frac{\rho_{\revised{t/\sigma}}}{\pi}$ belongs to this space or not, where  $\rho_t=\operatorname{Law}(w_t)$. In the next, we will show that $\frac{\rho_{\revised{t/\sigma}}}{\pi}$ is a solution to equation \eqref{eq:23} in the sense of Definition \ref{def:8}  and a uniqueness result is obtained in such spaces. Moreover, by establishing uniqueness in the relevant function spaces, we confirm that the estimates from \revised{Theorem} \ref{prop:7} also apply to $\frac{\rho_{\revised{t/\sigma}}}{\pi}$. The following definitions are adapted from \cite[Chapter 9]{bogachev2022fokker}, here we only care about the non-negative solution.
\begin{definition}\label{def:8}
Fix $T>0$ and $\phi_t$ be the non-negative solution to equation \eqref{eq:23} in the distributional sense with initial data $\phi_0$. For any set $U\subset\mathbb{R}^d$, we denote $\phi_t\pi(U):=\int_{U}\phi_t(w)\pi(w)dw$.
    \begin{itemize}
        \item Subprobability solution: $\mathcal{SP}_{\phi_0}:=\{\phi_t: \phi_t\pi(\mathbb{R}^d)\leq \phi_0\pi(\mathbb{R}^d),\text{ for almost every }t\in (0,T),\\
        \int_0^T\int_U\norm{\nabla \obj(w)}^2\phi_t\pi(w)dwdt<\infty, \text{ for every ball } U\subset\mathbb{R}^d\}$;
        \item Integrable solution: $\mathcal{I}_{\phi_0}=:\{\phi_t:\sup_{t\in (0,T)}\phi_t\pi(\mathbb{R}^d)<\infty, \int_0^T\int_U\norm{\nabla \obj}^p\phi_t\pi(w)dw<\infty,\\
        \text{ for every ball }U\subset\mathbb{R}^d,\text{ and some } p>d+2\}$.
    \end{itemize}
\end{definition}

Obviously, the solution derived in \revised{Theorem} \ref{prop:7} is in class $\mathcal{SP}_{\phi_0}\cap\mathcal{I}_{\phi_0}$.

By \cite[Theorem 9.4.5, Theorem 9.6.3]{bogachev2022fokker}, if $\mathfrak{L}$ satisfies condition A, then $\mathcal{SP}_{\phi_0}$ contains at most one solution of \eqref{eq:23} in the distributional sense; if $\mathfrak{L}$ satisfies condition B, then $\mathcal{I}_{\phi_0}$ contains at most one solution of \eqref{eq:23} in the distributional sense.
\begin{itemize}
\item Condition A. There exist a positive $C^2$ function $V$, such that $V(w)\to\infty$ as $\norm{w}\to\infty$, and a positive constant $C$, such that
\begin{equation}\label{eq:a36}
    \mathfrak{L}V(w)\leq C+CV(w);
\end{equation}
\item Condition B. There exist a positive $C^2$ function $V$, such that $V(w)\to\infty$ as $\norm{w}\to\infty$, and a positive constant $C$,  such that
\begin{equation}\label{eq:a37}
    \mathfrak{L}V(w)\geq -C-CV(w),\quad \norm{\nabla V(w)}\leq C+CV(w).
\end{equation}
\end{itemize}

\begin{example}
Let $V(w)=\log(1+\norm{w}^2)$, then condition \eqref{eq:a36} is satisfied when
\begin{equation}
    \revised{-\inner{\nabla \obj(w)}{w}\leq C\Big(1+\norm{w}^2\log(1+\norm{w}^2)\Big).}
\end{equation}

Let $V(w)=\log(\log(e+\norm{w}))$, then condition \eqref{eq:a37} is satisfied when
\begin{equation}
    \revised{\inner{\nabla \obj(w)}{w}\leq C\norm{w}^2 \log(\norm{w}) \log (\log (1+\|w\|)),}
\end{equation}
for $w$ such that $\norm{w}$ large enough.
\end{example}

\begin{proposition}
    Let $\rho_t$ be the unique distribution of the Langevin dynamic \eqref{eq:langevin} with initial distribution $\phi_0\pi$,here $\phi_0\in L^2(\mathbb{R}^d,\pi)$. If operator $\mathfrak{L}$ satisfies condition A or condition B, then we have $\rho_t$ is smooth for $t>0$, and $\phi'_t:=\frac{\rho_{\revised{t/\sigma}}}{\pi}=\phi_t$, here $\phi_t$ is from \revised{Theorem} \ref{prop:7}.
\end{proposition}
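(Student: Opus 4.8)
The plan is to identify $\rho_t$ with $\phi_t\pi$, where $\phi_t$ is the solution built in Theorem \ref{prop:7}, by showing that $\phi'_t:=\rho_t/\pi$ lies in the same solution class and then invoking the uniqueness statement quoted just above (based on \cite[Theorem~9.4.5, Theorem~9.6.3]{bogachev2022fokker}). First I would record that, since $w_t$ is the unique strong solution of \eqref{eq:langevin} (Assumption \ref{assumption2}), its law $\rho_t=\operatorname{Law}(w_t)$ is a distributional solution of the Fokker--Planck equation \eqref{eq:FP} with initial datum $\rho_0=\phi_0\pi$: applying It\^o's formula to $f\in C_c^\infty(\mathbb{R}^d)$ and using that $\mathfrak{L}f$ is bounded on $\operatorname{supp}f$ (because $\obj$ is smooth) gives $\mathbb{E}[f(w_t)]-\mathbb{E}[f(w_0)]=\int_0^t\mathbb{E}[\mathfrak{L}f(w_s)]\,ds$, which is exactly the weak form of \eqref{eq:FP}. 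Since the diffusion matrix $\sqrt{2\sigma}\,I$ is nondegenerate and $\nabla\obj$ is smooth, interior parabolic regularity for \eqref{eq:FP} (see, e.g., \cite[Ch.~6--7]{bogachev2022fokker}, or the arguments used in \cite{fornasier2025regularity}) yields that $\rho_t$ admits a density, smooth on $(0,\infty)\times\mathbb{R}^d$; denoting this density again $\rho_t(w)$, the function $\phi'_t:=\rho_t/\pi$ is then well defined and smooth for $t>0$, and the computation recalled after \eqref{eq:FP} (valid distributionally since $\pi=e^{-\obj/\sigma}$ is smooth and strictly positive) shows $\phi'_t$ solves \eqref{eq:23} in the distributional sense with initial datum $\phi_0$.

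Next I would check that $\phi'_t$ belongs to the relevant class of Definition \ref{def:8}. Because $\rho_t$ is a probability measure for every $t$, we have $\phi'_t\pi(\mathbb{R}^d)=1=\phi_0\pi(\mathbb{R}^d)$ for all $t$, which simultaneously gives the subprobability bound defining $\mathcal{SP}_{\phi_0}$ and the finite-total-mass bound defining $\mathcal{I}_{\phi_0}$; and for any ball $U$, $\|\nabla\obj\|$ is bounded on $\overline{U}$ (as $\obj\in C^\infty$) while $\int_U\phi'_t\pi\,dw=\rho_t(U)\le 1$, so $\int_0^T\int_U\|\nabla\obj\|^q\phi'_t\pi\,dw\,dt<\infty$ for every $q\ge 1$, covering both the $q=2$ requirement of $\mathcal{SP}_{\phi_0}$ and the $q=p>d+2$ requirement of $\mathcal{I}_{\phi_0}$. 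Hence $\phi'_t\in\mathcal{SP}_{\phi_0}\cap\mathcal{I}_{\phi_0}$; recall that, as observed after Definition \ref{def:8}, the solution $\phi_t$ of Theorem \ref{prop:7} also lies in $\mathcal{SP}_{\phi_0}\cap\mathcal{I}_{\phi_0}$ and is a (classical, hence distributional) solution of \eqref{eq:23} with the same initial datum $\phi_0$.

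Finally I would conclude by uniqueness: if $\mathfrak{L}$ satisfies condition~A, then by \cite[Theorem~9.4.5]{bogachev2022fokker} the class $\mathcal{SP}_{\phi_0}$ contains at most one distributional solution of \eqref{eq:23}, and if $\mathfrak{L}$ satisfies condition~B, then by \cite[Theorem~9.6.3]{bogachev2022fokker} the class $\mathcal{I}_{\phi_0}$ does; in either case $\phi_t$ and $\phi'_t$ are two such solutions, so $\phi'_t=\phi_t$ for $t>0$, i.e. $\rho_t=\phi_t\pi$. Smoothness of $\rho_t$ for $t>0$ then follows at once from the smoothness of $\phi_t$ (Theorem \ref{prop:7}) and of $\pi$, and the estimates of Theorem \ref{prop:7} transfer verbatim to $\rho_t/\pi$. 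The only real obstacle is the preliminary regularity step: one must know a priori that $\rho_t$ has a density regular enough to (i) make sense of the change of variables $\rho\mapsto\rho/\pi$ and (ii) place $\phi'_t$ inside the classes of Definition \ref{def:8}; once this is secured via the nondegeneracy $\sigma>0$ and smoothness of $\obj$, the identification is purely a matter of matching $\phi'_t$ and $\phi_t$ against the cited uniqueness theorems.
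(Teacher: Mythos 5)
Your proposal follows essentially the same route as the paper: establish that $\rho_t$ has a smooth density (the paper invokes \cite[Theorem~2.1]{de2011smoothness} directly, while you appeal more loosely to interior parabolic regularity, but the substance is the same), observe that $\phi'_t:=\rho_t/\pi$ therefore lies in $\mathcal{SP}_{\phi_0}\cap\mathcal{I}_{\phi_0}$, and conclude $\phi'_t=\phi_t$ from the uniqueness theorems \cite[Theorem~9.4.5, Theorem~9.6.3]{bogachev2022fokker} under condition~A or~B. The only difference is that you spell out the class-membership verification (mass $=1$, boundedness of $\|\nabla\obj\|$ on balls), which the paper states without proof; this is a correct and helpful elaboration rather than a different argument.
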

\begin{proof}
    By \cite[Theorem 2.1]{de2011smoothness}, we know the law of the Langevin dynamic~\eqref{eq:langevin} has a smooth density $\rho_t,\forall t>0$~(since $\nabla \obj$ is local Lipschitz continuous, so the distribution is unique), which means $\phi'_t$ is smooth in the space variable, thus we have $\phi_t'\in \mathcal{SP}_{\phi_0}\cap\mathcal{I}_{\phi_0}$. If $\mathfrak{L}$ satisfies condition A or condition B, then we have $\phi_t'=\phi_t$, since $\phi_t\in \mathcal{SP}_{\phi_0}\cap\mathcal{I}_{\phi_0}$.
\end{proof}

\section{Characterizing the large time behavior} \label{sec:asympt}
In this section we wish to characterize, again independently of the integrability of $\pi(w)=e^{-\frac{\obj(w)}{\sigma}}$,   the large-time behavior of the law of the process $\rho_t=\operatorname{Law}(w_t)$, for $w_t$ being the solution of the Langevin dynamics \eqref{eq:langevin}. 
We can now leverage \eqref{eq:conv2const} to characterize the large time behavior of \revised{$\rho_t=\varphi_{t}\pi$, here $\varphi_t=\phi_{t\sigma}$  and $\phi_t$ is from Theorem \ref{prop:7}.}

\subsection{Computing $\revised{\varphi}_\infty$, for which $\rho_t \to \revised{\varphi}_\infty \pi$}

From \eqref{eq:conv2const},
we know that $\revised{\varphi}_t$ will converge to a constant with a quantitative rate of $\mathcal O\Big ( \frac{1}{t\sigma} \Big )$, let us denote such constant $\revised{\varphi}_\infty$. Since we have 
\begin{equation}
    \int\revised{\varphi}_td\pi(w)=1,\quad \forall t\geq 1,
\end{equation}
let $t\to\infty$ and we have
\begin{equation}
    \revised{\varphi}_\infty\int 1d\pi(w)= \int\revised{\varphi}_td\pi(w)=1.
\end{equation}
Hence, we conclude that 
\begin{equation}
    \revised{\varphi}_\infty=\frac{1}{\pi(\mathbb{R}^d)}.
\end{equation}
In particular, if $\pi$ is not integrable, we conclude necessarily $\revised{\varphi}_\infty=0$, which means for any compact set $Z$ in $\mathbb{R}^d$, we have $\lim_{t\to\infty}\rho_t(Z)=0$.

\begin{remark}
On the one hand, in Proposition~\ref{prop:concentration} we established that $\rho_t$ must concentrate on $M_\epsilon$, a neighborhood of $\mathcal{W}^*$, in the sense that $\rho_t(M_\epsilon) \geq 1 - \epsilon$ for sufficiently large $t$. On the other hand, the conclusion of this section asserts that if $\pi(w)=e^{-\mathcal{L}(w)/\sigma}$ is not integrable, then $\lim_{t \to \infty} \rho_t(Z) = 0$
for any compact set $Z \subset \mathbb{R}^d$. Together, these facts imply that $M_\epsilon$ cannot be compact if $\obj$ is non-integrable.  Indeed, a classical result reported in~\cite[Theorem 2 and Appendix A]{karimi2016linear} shows that any function $\mathcal{L}$ satisfying the global Polyak--{\L}ojasiewicz condition necessarily exhibits quadratic growth:
\begin{equation}\label{eq:QG}
\mathcal{L}(w) - \obj_* \geq \mu \dist{w}^2, \quad \forall w \in \mathbb{R}^d,
\end{equation}
for some $\mu>0$ depending on $\ell_1$. 
Then, we arrive at the following dichotomy:
If a function satisfies the global Polyak-{\L}ojasiewicz condition 
\begin{enumerate}
    \item either it has a compact set of minimizers, and necessarily $\pi(w)=e^{-\mathcal L(w)/\sigma}$ is  integrable (by virtue of  the quadratic growth \eqref{eq:QG});
    \item or it has an unbounded set of minimizers, and then—under the additional assumption that $\obj(w) - \obj_* \leq H(\dist{w})$ for all $w$ and for some positive continuous function $H$—necessarily $\pi(w)=e^{-\mathcal L(w)/\sigma}$ is not integrable\footnote{Without the growth condition $\obj(w) - \obj_* \leq H(\dist{w})$ for all $w$, there are $\obj$ with unbounded $\mathcal W^*$ for which $\pi(w)$ is integrable.}. (The most immediate example illustrating this situation is the quadratic loss \eqref{eq:example}, which we discussed in detail in Remark \ref{quadr}.)
\end{enumerate} 
Hence, according to 1., it is not possible to have a function satisfying the Polyak--{\L}ojasiewicz condition whose set of global minimizers is compact, and at the same time have $\pi(w)=e^{-\mathcal{L}(w)/\sigma}$ be non-integrable. 
\end{remark}

In this section, we have characterized the large-time behavior of the Langevin dynamics~\eqref{eq:langevin}, including the case where $\pi(w)=e^{-\mathcal{L}(w)/\sigma}$ is non-integrable. For related results concerning the large-time behavior of Stochastic Gradient Descent, we refer the reader to~\cite{arora22,shalova2024}. In these works, under smoothness assumptions on the manifold $\mathcal{W}^*$, the authors demonstrate that the dynamics exhibit a random walk around  the set $\mathcal{W}^*$ in the large-time limit.

\section*{Acknowledgement}
Massimo Fornasier and Lukang Sun acknowledge the support from the Munich Center for Machine Learning. Massimo Fornasier acknowledges the support of the European Research Council (ERC) under the European Union’s Horizon Europe research and innovation programme (grant agreement No. 101198055, project acronym NEITALG).
Rachel Ward acknowledges the support from the Humboldt Foundation Research Award, as well as AFOSR Grant No. FA9550-19-1-0005, NSF Grant 2019844, and NSF Grant 2217069. 

\bigskip
\begin{center}
  \FundingLogos
  
  \vspace{0.5em}
  \begin{tcolorbox}\centering\small
    % CHOOSE the sentence that applies to your grant/programme and replace placeholders.
    % 1) For Horizon Europe / ERC (recommended wording):
   
    Funded by the European Union. Views and opinions expressed are however those of the author(s) only and do not necessarily reflect those of the European Union or the European Research Council Executive Agency. Neither the European Union nor the granting authority can be held responsible for them. This project has received funding from the European Research Council (ERC) under the European Union’s Horizon Europe research and innovation programme (grant agreement No. 101198055, project acronym NEITALG).
    
    % 2) (Alternatively, for Horizon 2020 ERC grants use the H2020 wording:)
    % This project has received funding from the European Research Council (ERC) under the European Union's Horizon 2020 research and innovation programme (grant agreement No. <GRANT NUMBER>, project acronym <ACRONYM>).
  \end{tcolorbox}
\end{center}

\bibliography{bib.bib}
\bibliographystyle{apalike}

\end{document}